\documentclass[12pt]{article}
\usepackage[utf8]{inputenc}
\usepackage{amsmath,amssymb,amsthm}
\usepackage{hyperref}
\usepackage{graphicx}
\renewcommand\le{\leqslant}
\renewcommand\ge{\geqslant}
\newcommand\eps{\varepsilon}
\newcommand\R{\mathbb R}
\newcommand\wphi{\widetilde{\varphi}}

\newcommand{\embed}[2]{\stackrel{#1,\,#2}{\lhook\joinrel\longrightarrow}}
\newcommand{\embednet}[3]{\stackrel{#1,\,#2,\,#3}{\lhook\joinrel\longrightarrow}}
\newtheorem*{theorem}{Theorem}
\newtheorem{proposition}{Proposition}
\newtheorem{lemma}{Lemma}
\newtheorem*{corollary}{Corollary}

\theoremstyle{definition}
\newtheorem*{procedure}{Procedure}
\newtheorem*{definition}{Definition}

\title{Recovery of regular ridge functions on the ball}
\author{Tatyana~Zaitseva\thanks{Moscow Center for Fundamental and Applied
Mathematics, Laboratory ``High-dimensional Approximation and Applications'' of
Lomonosov Moscow State University. Email: zaitsevatanja@gmail.com},
Yuri Malykhin\thanks{Steklov Mathematical Institute, Laboratory
``High-dimensional Approximation and Applications'' of
Lomonosov Moscow State University. Corresponding author. Email: malykhin@mi-ras.ru}, Konstantin~Ryutin\thanks{Moscow Center for Fundamental and Applied
Mathematics,  Laboratory ``High-dimensional Approximation and Applications'' of
Lomonosov Moscow State University. Email: kriutin@yahoo.com}}

\begin{document}
\maketitle
\begin{abstract}
    We consider the problem of the uniform (in $L_\infty$) recovery of ridge
    functions $f(x)=\varphi(\langle a,x\rangle)$, $x\in B_2^n$, using noisy
    evaluations $y_1\approx f(x^1),\ldots,y_N\approx f(x^N)$. It is known that
    for classes of functions $\varphi$ of finite smoothness the
    problem suffers from the curse of dimensionality: in order to provide good
    accuracy for the recovery it is necessary to make exponential number of
    evaluations.  We prove that if $\varphi$ is analytic in a neighborhood of
    $[-1,1]$ and the noise is very small, $\eps\le\exp(-c\log^2n)$, then there is an efficient algorithm that
    recovers $f$ with good accuracy using $O(n\log^2n)$ function
    evaluations.
\end{abstract}

\textbf{Keywords}: Ridge functions, Curse of dimensionality, recovery of
analytic functions

\textbf{MSC}: 41A10, 41A63, 65D15

\section{Introduction}

\paragraph{Ridge functions recovery.}

We consider ridge functions (plane waves) on the ball
$B_2^n=\{x\in\mathbb{R}^n\colon|x|\le 1\}$, i.e. functions of the form
$f(x)=\varphi(\langle a,x\rangle)$, where $a\in\mathbb R^n$, $|a|=1$ is a fixed
vector and $\varphi$ is a function on $[-1,1]$ (as usual, $\langle x,y\rangle$
is the scalar product on $\mathbb{R}^n$). The problem is to recover $f$ from its
$N$ noisy values at some points: $y_1\approx f(x^1),\ldots,y_N\approx f(x^N)$. The recovery algorithm is allowed
to choose points $x^1,\ldots,x^N$ for function evaluation; the algorithm is called
\textit{adaptive} if the point $x^{i+1}$ may depend on the previously obtained values
$y_1,\ldots,y_i$. When we say that our algorithm makes an evaluation of
the unknown function we  mean that it prescribes the point and receives the
approximate value of the function at this point.

The difficulty is that when $\log N=o(n)$, for any points
$x^1,\ldots,x^N\in B_2^n$ there exists a unit vector $a$ such that
$\max_j|\langle a,x^j\rangle| = o(1)$, as $n\to\infty$. Therefore we can measure
$\varphi$ only in a small neighborhood of zero and we cannot distinguish a
nontrivial function supported outside this neighborhood from the identically zero function.
Therefore, if one knows only that  $\varphi$ is smooth, then any recovery method
requires exponential (with respect to the dimension) number of evaluations.

In the theory of function recovery the number of function evaluations sufficient
to recover the unknown function with a given accuracy is studied.  If
its dependence  on the dimension is exponential, then it is called the  curse of dimensionality. Results on the existence of recovery
algorithms with a small number of evaluations (say, polynomially depending on the
dimension of the space) or, conversely, that any algorithm requires a large
(exponential) number of evaluations are actively studied within the Information-based
Complexity theory.
See~\cite{NW16} for some statements and results in this direction.

For $r>0$ let $\mathrm{Lip}(r)$ be the class of functions  $g\colon[-1,1]\to
\mathbb{R}$, with $m=\lceil r-1\rceil$ continuous derivatives on $[-1,1]$,
s.t. $\|g^{(j)}\|_\infty\le 1$, $j=0,\ldots,m$, and $g^{(m)}$
is $(r-m)$--H\"{o}lder with constant $1$.

The recovery in $L_\infty$ for smooth ridge functions on the
cube,
\begin{equation}\label{ridge_cube}
    f(x)=\varphi(\langle a,x\rangle),\quad x\in [-1,1]^n,\;\|a\|_1 = 1,\;
\varphi\colon [-1,1]\to\mathbb R,
\end{equation}
was studied by A.~Cohen, I.~Daubechies, R.~DeVore, G.~Kerkyacharian, D.~Picard in~\cite{CDD12}.
The authors assume that $\varphi\in \mathrm{Lip}(r)$ and the vector $a$ is
nonnegative: $a_i\ge 0$, $i=1,\ldots,n$. This condition is very restrictive and it permits to
 recover  $\varphi(t)$ at any point  $t\in[-1,1]$ as: $\varphi(t) = f(x_t)$,
$x_t := (t,t,\ldots,t)$. The most interesting results in~\cite{CDD12} were obtained
under the additional restriction that $a$ is bounded in the weak-$\ell_p^n$ norm, $p\in(0,1)$.

These studies were continued by B.~Doerr and S.~Mayer in~\cite{DM21}. They dropped
the non-negativity condition and replaced weak-$\ell_p$ condition by the following:
\begin{equation}\label{uslov_a}
    \|a\|_p\le M\quad\mbox{for some $p\in(0,1)$ and $M\ge 1$.}
\end{equation}
Let $\mathcal R(r,p,M)$ be the class of ridge functions  $f$ on the
cube,~(\ref{ridge_cube}), with $\varphi\in\mathrm{Lip}(r)$ and vector $a$
satisfying ~(\ref{uslov_a}). The algorithm
from~\cite{DM21} gives an approximation  $\widetilde{f}$, such that
$$
\sup_{f\in\mathcal R(r,p,M)} (\mathsf{E}\|f-\widetilde{f}\|_\infty^2)^{1/2}
\le C(r,p,M)(\log N)^{-r(1/p-1)}\quad\mbox{for $N\ge n$.}
$$
On the other hand it was proven that without~(\ref{uslov_a}) we have the curse
of dimensionality: a similar recovery error for the class of ridge
functions of the form~(\ref{ridge_cube}) with $\varphi\in\mathrm{Lip}(r)$ is at
least $\eps_0>0$, when  $N\le C\exp(n/8)$.

Let us consider ridge functions on the ball:
$$
f(x)=\varphi(\langle a,x\rangle),\quad |a|=1,\;x\in B_2^n.
$$

The corresponding recovery problem, even in a more general setting, was
considered by M.~Fornasier, K.~Schnass and J.~Vybiral in~\cite{FSV12}.
They aim to recover functions of the form $f(x)=\varphi(Ax)$, where $A$ is some
fixed $k\times n$ matrix and $\varphi$ is a function of  $k$ variables. Let us
restrict ourselves to the case $k=1$ (ridge functions).
The authors of~\cite{FSV12} introduce the following quantity 
\begin{equation}\label{alpha}
\alpha := \int_{S^{n-1}}|\varphi'(\langle a,x\rangle)|^2\,d\mu_{S^{n-1}}(x)
\end{equation}
(it does not depend on $a$).
They proved that if $a$ satisfies~(\ref{uslov_a}) (the case of $p=1$ is possible),
$\varphi\in C^2$ and $\alpha$ is not close to zero (e.g. $|\varphi'(0)|$ is
separated from zero) then the efficient recovery of $f$ is possible. Roughly speaking,  $N\asymp
(\omega\alpha)^{-3}$ evaluations are sufficient in order to achieve the error
$\le\omega$ with high probability (when $p=1$). Later, H.~Tyagi and
V.~Cevher in~\cite{TC14} managed to get rid of the condition~(\ref{uslov_a}).

We remark that the functions considered in our paper can be very small in the
neighborhood of zero and the quantity $\alpha$ can also be very small. Given the
admissible error $\omega$, the class of analytic functions contains a function
with oscillation more than $\omega$, but $\alpha \le \exp(-c\log(1/\omega)\log n)$. 
This explains why we require the noise level to be subexponentially small
(it is a drawback for practical applications).

In~\cite{MUV15} different approximation characteristics for several classes of
ridge functions on the ball were estimated. As a corollary the formal proof of the
curse of dimensionality for the corresponding recovery problem was given.

A closely related problem in statistics is known as a ``single index model regression''. 
We briefly describe its formulation. Suppose $X$ is a random vector in $\R^n$ and
$Y$ is a random variable. It is assumed that the regression function
$f(x) = \mathsf{E}(Y|X=x)$ has the form $f(x) = g(\langle a,x\rangle)$, i.e.,
it is a ridge function; in the language of statistics, such an assumption is called
the single index model. It is required to construct a regression, i.e.,
approximate $f$ from a sample $\{(X_i, Y_i)\}_{i=1}^N$.
We note the following important distinctions between the regression
problem and our
problem: (1) we are not free to choose $X_i$; these points are a random sample from the
distribution $\mu_X$ unknown to us;
(2) the error is measured in the $L_2(\mu_X)$-norm rather than in the uniform norm.
See~\cite{GL07} for the statistical setting.

Ridge functions appear also in the study of neural networks; this connection (in
the context of the approximation theory) is discussed in the paper \cite{P99}.
Indeed, the output of a one-neuron neural network with the activation function
$\sigma$ is the ridge function: $x\mapsto \sigma(\langle w,x\rangle)$. Let us
remark  that the problem of almost-optimal fitting of neuron weights vector $w$
to a training set $\{(x_i,y_i)\}_{i=1}^N$ is NP-hard, see \cite{Sima} (for
$\ell_2$-norm). However, this problem differs from our setting in many aspects.

We remark that the well-known \textit{Phase Retrieval Problem} (see \cite{CW}) for real vectors  can be
reformulated as a particular case $\varphi(u)=u^2$ of the ridge function recovery
problem. It is not easy to compare our results with the state of the art in the
Phase Retrieval Problem since the settings are different (e.g. in the latter
problem the focus is on non-adaptive measurements).

 Many fundamental properties of the classes of ridge functions were studied   by
 A.Pinkus and his co-authors. The recovery problem was considered in the paper
 \cite{BP}; see also the book \cite{Pbook}. We also recommend the survey
 \cite{KKM}: it contains interesting results about the representation
 by sums of ridge functions and their approximation properties.

\paragraph{Regular ridge functions.}
S.V. Konyagin (personal communication) suggested to consider the case of
regular ridge functions on the ball, i.e. $f(x)=\varphi(\langle a,x\rangle)$ with
regular (analytic) $\varphi$. 

The recovery for classes of analytic functions was actively studied since the 1960s.
The main focus was on the best possible method. The details about the results
can be found in \cite{Os00}. 
In the Information-based Complexity field there are different results on tractability of integration and approximation problems for certain classes of analytic functions. 
We should mention the papers by J. Dick, P. Kritzer, F. Pillichshammer, H. Wo\'{z}niakowski \cite{DKPW}  and Vybiral \cite{Vyb14}.

In our paper we apply the recovery method and
error estimates from \cite{DT19}: an analytic function is extrapolated
from its noisy evaluations on the grid using least-squares fitted
algebraic polynomials.


Let $H(\sigma,Q)$ be the class of analytic functions  $\varphi$ in
$\Pi_\sigma=\{z=t+iy\colon |t|\le 1+\sigma, |y|\le \sigma\}$, such that
$|\varphi(z)|\le Q$ in $\Pi_\sigma$ and $\varphi(z)$ is real--valued for real $z$.
For this class we do have a polynomial recovery algorithm provided  the evaluation errors are
sufficiently small.

Given a class $\Phi$ of functions $\varphi\colon[-1,1]\to\R$, we denote by
$\mathcal R(\Phi,B_2^n)$ the class of ridge functions $f\colon B_2^n\to\R$ of the form
$f(x)=\varphi(\langle a,x\rangle)$, where $a\in\R^n$, $|a|=1$ and
$\varphi\in\Phi$.

\begin{theorem}
    Let $n\in\mathbb N$, $\sigma\in (0,1]$, $Q\ge 1$, $\delta_*\in(0,\frac12)$, $\omega_*\in(0,\frac12)$.
    There is a probabilistic adaptive algorithm: for any function $f\in\mathcal
    R(H(\sigma,Q),B_2^n)$ it uses $N$
    evaluations of $f$ with errors not exceeding $\eps$, where
    \begin{equation}\label{th_eps}
    \eps := Q\exp(-C(\sigma)\log(Qn/\omega_*)\log n),
    \end{equation}
    \begin{equation}\label{th_N}
    N := \lceil C(\sigma)\log^2(Qn/\omega_*)(\log(1/\delta_*)+n) \rceil
    \end{equation}
    and outputs an approximation  $\widetilde f$, such that with probability $\ge
    1-\delta_*$ we have
    $$
    \max_{x\in B_2^n}|\widetilde{f}(x)-f(x)|\le \omega_*.
    $$
\end{theorem}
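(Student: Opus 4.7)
The plan is a two-stage adaptive algorithm. \emph{Stage~I} produces $\hat a\in S^{n-1}$ with $|\hat a-a|\le\eta:=\omega_*/(C(\sigma)Q)$; \emph{Stage~II}, given $\hat a$, outputs $\widetilde\varphi$ with $\|\widetilde\varphi-\varphi\|_{L_\infty[-1,1]}\le\omega_*/2$. The returned approximation is $\widetilde f(x):=\widetilde\varphi(\langle\hat a,x\rangle)$, and since the Bernstein-type estimate for $H(\sigma,Q)$ gives $\|\varphi'\|_\infty\le C(\sigma)Q$, the triangle inequality
\begin{equation*}
\|\widetilde f-f\|_\infty\le\|\widetilde\varphi-\varphi\|_{L_\infty[-1,1]}+\|\varphi'\|_\infty\,|\hat a-a|
\end{equation*}
delivers $\|\widetilde f-f\|_\infty\le\omega_*$.

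Stage~II is the easy direction. Given $\hat a$, the Chebyshev-node samples $f(t_j\hat a)=\varphi(t_j\langle\hat a,a\rangle)$ are noisy evaluations of $\varphi$ on $[-1,1]$ (the dilation factor $\langle\hat a,a\rangle=1-O(\eta^2)$ is essentially negligible). The polynomial least-squares scheme of \cite{DT19} then recovers $\varphi$ to accuracy $\omega_*/2$ from $m=O(\log(Q/\omega_*))$ samples with only a $\log m$ factor of noise amplification, well within the budget~(\ref{th_eps})--(\ref{th_N}).

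Stage~I is the main work, and I would do it in three steps. \emph{(a)} Pick a single uniform random $u\in S^{n-1}$; with probability $\ge 1-\delta_*/2$ we have $|\beta|:=|\langle a,u\rangle|\gtrsim(n\log(1/\delta_*))^{-1/2}$. Evaluate $f(t_ju)$ at Chebyshev nodes $t_j\in[-1,1]$; each is a noisy sample of $\psi_u(t):=\varphi(t\beta)$, which is bounded by $Q$ and analytic on the \emph{wide} strip $\Pi_{\sigma/|\beta|}$. Hence \cite{DT19} recovers $\psi_u$, and so $\varphi$ on $[-|\beta|,|\beta|]$, to any target accuracy $\eps'$ with only polylog noise blow-up. \emph{(b)} Stably extrapolate $\varphi$ from $[-|\beta|,|\beta|]$ to $[-1,1]$: a Hadamard three-circle / harmonic-measure argument in $\Pi_\sigma$ gives
\begin{equation*}
\|\widetilde\varphi-\varphi\|_{L_\infty[-1,1]}\le Q(\eps'/Q)^{\mu},\qquad \mu\asymp 1/\log(1/|\beta|)\asymp 1/\log n,
\end{equation*}
so requiring this to be at most $\omega_*/(CQ)$ forces $\eps'\le Q\exp(-C\log n\cdot\log(Q/\omega_*))$, exactly~(\ref{th_eps}). \emph{(c)} Extract $a$ by a gradient estimate. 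Assuming $\|\varphi-\varphi(0)\|_{L_\infty[-1,1]}\ge\omega_*/2$ (otherwise the constant $\varphi(0)$ already solves the problem), the same three-circle argument applied to $\varphi'$ yields $\max_{[-|\beta|,|\beta|]}|\varphi'|\gtrsim Q(\omega_*/Q)^{O(\log n)}$. Identify from $\widetilde\varphi$ a value $t_0\in[-1,1]$ at which $|\varphi'(t_0\beta)|$ attains this order, set $x_0:=t_0u$, and compute the $n$ coordinate finite differences $(f(x_0+he_i)-f(x_0))/h$; normalising yields $\hat a$, and the standard finite-difference error analysis, using the lower bound on $|\varphi'(t_0\beta)|$, gives $|\hat a-a|\le\eta$ under the same noise bound~(\ref{th_eps}).

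The main obstacle is the matched quantitative control in (b) and (c): the harmonic-measure exponent $\mu$, the lower bound on $|\varphi'|$ at the base point, and the finite-difference stability must all be handled by the single noise threshold~(\ref{th_eps}). The sample count is $O(\log^2(Qn/\omega_*))$ for (a) and Stage~II, $O(n)$ for (c), with a factor $\log(1/\delta_*)$ from the random-direction choice in (a), matching~(\ref{th_N}).
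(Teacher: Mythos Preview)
Your route is genuinely different from the paper's. You recover $a$ by a finite-difference gradient estimate at a base point where $|\varphi'|$ is certifiably bounded below, the certificate coming from a Hadamard/harmonic-measure transfer from $[-1,1]$ down to $[-|\beta|,|\beta|]$. The paper never touches derivatives: it samples along $N_2\asymp\log(1/\delta_*)$ random directions $\gamma^i$, builds one-dimensional profiles $\wphi_i(t)\approx\varphi(v_it)$ by polynomial extrapolation \`a la~\cite{DT19}, and then introduces an \emph{embedding} device---searching for $\lambda$ with $\wphi_i(t)\approx\wphi_j(t/\lambda)$---to read off the ratio $|v_j|/|v_i|$ without knowing either. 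Order statistics on these pairwise ratios isolate an index $i_0$ with $|v_{i_0}|\in[0.45,0.75]$, and each coordinate $a_k$ is then obtained by embedding $\wphi_{i_0}$ into profiles along $e_k$ and $0.9e_{k^*}+0.1e_k$. Your argument is shorter and closer in spirit to~\cite{FSV12,TC14}; the paper's avoids finite-difference noise amplification and makes the $\delta_*$-dependence cleanly separable. Your harmonic-measure exponent $\mu\asymp c(\sigma)/\log n$ is correct and is in fact sharper than what the paper's Lemma~2 (which is only ever applied at $\tau\asymp\sigma$) would yield for $\tau\asymp n^{-1/2}$.

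There is, however, a concrete error in step~(a). For a single uniform $u\in S^{n-1}$ one has $\sqrt n\,\langle a,u\rangle$ approximately $\mathcal N(0,1)$, so $\mathsf P(|\beta|<t)\asymp t\sqrt n$; hence with probability $\ge 1-\delta_*/2$ you only get $|\beta|\gtrsim\delta_*/\sqrt n$, \emph{not} $(n\log(1/\delta_*))^{-1/2}$. For small $\delta_*$ this makes $1/\mu\asymp\log n+\log(1/\delta_*)$ and forces the noise threshold to depend on $\delta_*$, which~\eqref{th_eps} does not allow. The repair is to draw $N_2=O(\log(1/\delta_*))$ directions so that with probability $\ge 1-\delta_*$ at least one has $|\beta_i|\ge c/\sqrt n$; you can then select such a direction simply by taking any $i$ whose recovered $\widetilde\psi_{u_i}$ has oscillation above the threshold $(\omega_*/Q)^{C(\sigma)\log n}$ (and output the constant if none does). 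A smaller gap: step~(b) as written is a bound, not an algorithm---Hadamard does not hand you $\widetilde\varphi$ on $[-1,1]$, and since $\beta$ is unknown you cannot literally rescale. But you do not need to: both the constant/non-constant dichotomy and the lower bound on $|\varphi'|$ in~(c) can be run directly on the oscillation of $\psi_u$ on $[-1,1]$, so~(b) is only required as an inequality, not as a construction.
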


We remark that $\widetilde{f}$ is of the form  $\widetilde{f}(x) = \wphi(\langle
\widetilde{a},x\rangle)$, with $\wphi$ an algebraic polynomial of degree not
exceeding $C(\sigma)\log(Q/\omega_*)$.  The number of operations for the
algorithm depends polynomially on $n$, $Q/\omega_*$ and $\log(1/\delta_*)$ for
fixed $\sigma$.

The function  $f$ is invariant under  $(a,\varphi(x))\mapsto
(-a,\varphi(-x))$.
We recover either  $a$ and $\varphi$ or $-a, \varphi(-x)$.

\begin{corollary}
    Let $\varkappa>0$.  There is a polynomial algorithm that uses at most
    $C(\sigma,\varkappa)n\log^2n$ evaluations of an unknown function $f\in
    R(H(\sigma,Q),B_2^n)$ with errors $\le Q\exp(-C(\sigma,\varkappa)\log^2 n)$
    and outputs an approximation $\widetilde f$, such that
    $\|\widetilde{f}-f\|_{L_\infty(B_2^n)}\le Qn^{-\varkappa}$ 
    with probability $\ge 1-2e^{-n}$.
\end{corollary}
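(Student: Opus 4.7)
The plan is to obtain the Corollary as a direct parameter specialization of the Theorem. I would instantiate the Theorem with
\[ \omega_* := Qn^{-\varkappa}, \qquad \delta_* := 2e^{-n}, \]
so that the Theorem's accuracy guarantee $\|\widetilde f - f\|_\infty \le \omega_*$ matches the one claimed by the Corollary, and the success probability $1-\delta_*$ matches $1-2e^{-n}$. Both parameters must lie in $(0,\tfrac12)$; this fails only for $n$ below a threshold depending on $\sigma,Q,\varkappa$, and those finitely many exceptional dimensions are absorbed into the constant $C(\sigma,\varkappa)$ by allowing the algorithm to act arbitrarily (e.g.\ output $\widetilde f\equiv 0$) in that bounded regime.

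Next I would substitute the choices into \eqref{th_N} and \eqref{th_eps}. Since $Q/\omega_* = n^{\varkappa}$, we have $\log(Qn/\omega_*) = (\varkappa+1)\log n$. For the noise level,
\[ \eps = Q\exp\bigl(-C(\sigma)(\varkappa+1)\log n \cdot \log n\bigr) = Q\exp\bigl(-C(\sigma,\varkappa)\log^2 n\bigr), \]
which is the bound required. For the sample size, using $\log(1/\delta_*) = n - \log 2 \le n$,
\[ N \le C(\sigma)(\varkappa+1)^2\log^2 n \cdot (\log(1/\delta_*) + n) \le 2C(\sigma)(\varkappa+1)^2\, n\log^2 n = C(\sigma,\varkappa)\, n\log^2 n. \]

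The Theorem then hands us an approximation $\widetilde f$ with $\|\widetilde f - f\|_\infty \le \omega_* = Qn^{-\varkappa}$ with probability $\ge 1-\delta_* = 1-2e^{-n}$, which is exactly the statement of the Corollary. I do not foresee any real obstacle: the computation is just bookkeeping of the two constants. The step most worth being careful about is the reabsorption $C(\sigma)(\varkappa+1)^j \rightsquigarrow C(\sigma,\varkappa)$ for $j=1,2$, and checking that the regime in which $\omega_* < \tfrac12$ and $\delta_* < \tfrac12$ fail only for $n$ below an $(\sigma,\varkappa,Q)$-dependent constant, so the trivial treatment of that regime does not spoil the asymptotic bounds.
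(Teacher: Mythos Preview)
Your approach is correct and is exactly what the paper intends: the Corollary is stated immediately after the Theorem with no separate proof, so it is meant to be read as the parameter specialization you describe. One small point to tighten: your fallback ``output $\widetilde f\equiv 0$'' for small $n$ does not actually guarantee error $\le Qn^{-\varkappa}$, and the threshold $n>(2Q)^{1/\varkappa}$ for $\omega_*<\tfrac12$ depends on $Q$, which you cannot absorb into $C(\sigma,\varkappa)$. Both issues disappear if you first invoke the homogeneity reduction to $Q=1$ (as the paper does in its ``Notations'' paragraph): then the threshold is $n>2^{1/\varkappa}$, and for $n$ below it you may apply the Theorem with, say, $\omega_*=1/3$, which is $\le n^{-\varkappa}$ in that range.
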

   
We did not try to optimize all the steps of the algorithm.  Our main point is
the possibility of the effective (of  polynomial complexity)  recovery   of regular
ridge functions. We also developed a new method involving global
properties of functions (the so-called embedding) and order statistics.
These results were announced in~\cite{ZMR20}.

In section~\ref{sect_practice} we discuss the
numerical implementation of the algorithm.  
We remark that in the program we find embedding coefficients using an effective procedure that
requires minimization of a polynomial, see~\eqref{effective_embed}.

\paragraph{Notations and useful facts.}
By homogeneity we may assume that $Q=1$. Indeed, we will show that the
algorithm works for $Q=1$. For a general $Q\ge 1$ (which is given), we can divide all evaluated values
$y_k$ by $Q$ and apply our algorithm to recover $f/Q\in H(\sigma,1)$ with error $\omega_*/Q$.

Fix $\sigma\in(0,1]$. We may further assume that $n$ is large enough and $\omega_*$ and $\delta_*$ are small enough.
Indeed, suppose that our algorithm works for some $n^\circ$, $\omega_*^\circ$,
$\delta_*^\circ$ and requires $N^\circ$ function evaluations with error
$\eps^\circ$. 
Then the same algorithm works also for any $n\le n^\circ$ (we can treat the
vector $a\in S^{n-1}$ as a vector in $S^{n^\circ-1}$ by adding zero coordinates)
$\omega_*\ge\omega_*^\circ$, $\delta_*^\circ\ge\delta_*$. Our Theorem holds for
$n$, $\omega_*$, $\delta_*$ provided that the number of evaluations given by the
expression~\eqref{th_N} is at least $N^\circ$, and $\eps$ from~\eqref{th_eps}
is at most $\eps^\circ$. This can be achieved if we take $C(\sigma)$ large enough.

For the rest of the paper $\varphi$ denotes some unknown function from
$H(\sigma,1)$, $\sigma\in(0,1]$, and $a$ the unknown vector of coefficients of
the ridge function  $f(x)=\varphi(\langle a,x\rangle)$, $x\in B_2^n$, $|a|=1$.

Different quantities  that are
produced by the algorithm  will be denoted by variables with  ``tildes'': e.g.
$\wphi_i$, $\widetilde\Delta^i_{h,\nu}$ (they depend on the taken  samples of our function $f$).

In the rest of the paper for any vector $\gamma \in B_2^n$ we denote   $ v_\gamma
:= n^{1/2}\langle a,\gamma \rangle,$ and $\wphi_\gamma$ is the approximation  for  the function  $\varphi(v_\gamma t)$ satisfying  \eqref{phi_approx} (see the first step of the algorithm).
        
We will fix some constants $b,B>0$; it is supposed that $b^{-1}$ and $B$ are large
enough. Namely, they satisfy the condition \eqref{bB_def} given below; in fact one
can take  $b=0.01$, $B=5$. We call a real number $v$
\textit{typical} if  $b \le |v|\le B$.

Let $\sigma_1 := \sigma\frac{b}{4B}$.


Throughout the paper  $t$ and $x$ are real variables and  $z$ denotes a complex variable.
Therefore the set  $\{|t|\le h\}$ is a segment, and $\{|z|\le h\}$ is a disk on the complex plane.

We denote by $c,c_1,\ldots,C,C_1,\ldots$ positive reals (their values may
differ from line to line).

The functions from  $H(\sigma,1)$ are $L_\sigma$-Lipschitz on  $[-1,1]$, and 
$L_\sigma\le C\sigma^{-1}$ (the explicit dependence on $\sigma$ is not important for us).


Let $E_\rho$ be the ellipse with focii $\pm 1$ and the sum of semi-axes $\rho$.
We note that the major semi-axis of  $E_\rho$ equals 
$R=\frac12(\rho+\rho^{-1})$; and we have $\rho=R+\sqrt{R^2-1}$. It is clear that $E_{1+\sigma}\subset \Pi_\sigma$. When we say that some function $\psi$ is analytic in $E_\rho$ and $|\psi|\le C$  there, we mean that it is analytic in the open domain bounded by $E_\rho$ and the estimate holds in the closed domain.

We make use of different probabilistic notions and constructions.  For any random variable $\xi$ we denote by  $\mathsf{Law}(\xi)$ its distribution.
 $\Phi$ denotes the distribution function of the standard gaussian random
 variable; by $\Phi^*$ we denote the distribution function  of  $|\xi|$,
 $\xi\sim\mathcal N(0,1)$, i.e. $\Phi^*(x)=2(\Phi(x)-1/2)$, $x\ge 0$.

For describing the algorithm and estimating its accuracy we use (global)
parameters $N_1$, $N_2$, $N_3$, $M$, $M_1$ (sufficiently large natural numbers) and
$\omega_1$, $\omega_2$, $\omega_3$ (sufficiently small positive reals)
that will be chosen in order to satisfy different inequalities required for our
algorithm. All of them may depend on $\sigma$, $n$, $\omega_*$,
with the sole exception of $N_2$, that depends only on $\delta_*$.  The
meaning of these parameters will be clear from the scheme of the algorithm given
below.

\paragraph{The scheme of the recovery algorithm.}
Here we describe the steps of the algorithm and the ideas behind them. We hope
that it will help the reader to follow the detailed proofs given in the
next section.

\begin{enumerate}
    \item\label{step_procedure_wphi} The procedure of extrapolation.

        Given a vector $\gamma\in B_2^n$,
        we evaluate the function $f$ on the grid:
        $$
        y_k\approx f\left(\gamma\frac{k}{N_1}\right)=\varphi\left(\langle
        a,\gamma\rangle\frac{k}{N_1}\right),\quad
        k=-N_1,\ldots,N_1,
        $$
        where $N_1$ is large enough.
        Let $v_\gamma := n^{1/2}\langle a,\gamma \rangle$.
        Thus, $\varphi(v_\gamma t)$ will be evaluated on the uniform grid in $[-n^{-1/2},n^{-1/2}]$.
        We fit a polynomial of an appropriate degree $M$ to the  obtained values by
        least squares and use it to extrapolate $\varphi(v_\gamma t)$ to larger
        segments.
        We rely on the estimates from~\cite{DT19} to prove that the constructed
        function $\wphi_\gamma$ gives an approximation with small enough error
        $\omega_1$:
        \begin{equation}\label{phi_approx}
            |\wphi_\gamma(t)-\varphi(v_\gamma t)|\le\omega_1,
            \quad |t|\le \min(1,\frac{\sigma}{2|v_\gamma|}).
        \end{equation}

    \item \label{step_wphi} The construction of  $\wphi_i$. 

        We take  $N_2$ random vectors on the sphere:
        $\gamma^1,\ldots,\gamma^{N_2}$. For each  $\gamma^i$ we construct the function 
        $\wphi_i := \wphi_{\gamma^i}$, that approximates  $\varphi(v_it)$,
        $v_i:=v_{\gamma^i}$. As a result we obtain the set of functions
        $\{\wphi_i\}_{i=1}^ {N_2}.$

        Recall that a number $v$ is called typical if $b\le|v|\le B$. Typical $v_i$ are
        most convenient for us: we see from~\eqref{phi_approx} that, first, they
        are informative, i.e., a rather large part of $\varphi$ is recovered,
        and second, the functions $\wphi_i$ exhibit a ``good'' behaviour on a
        fairly large segment $|t|\le \sigma/(2B)$.

        Since our choice of  $\gamma^i$ is random, our algorithm is
        probabilistic. All other constructions and statements are true under
        condition~\eqref{distr2} on $\gamma^i$, that holds with
        high probability. That condition implies, e.g., that most of the
        $v_i$ are typical.

    \item \label{step_smallvar}
The estimation of function oscillation. 
        
        At this step we distinguish the case of a function $\varphi$ close to a
        constant from the case of a function whose oscillation is large.
        We estimate the oscillation of functions $\wphi_i$ and obtain either 
        \begin{equation}\label{phi_const}
            \Delta_{\sigma_1/4} := \max_{|t|\le \sigma_1/4}|\varphi(t)-\varphi(0)| \le \omega_2,
        \end{equation}
        or the inequality 
        \begin{equation}\label{delta_bound}
            \Delta_{\sigma_1} = \max_{|t|\le
            \sigma_1}|\varphi(t)-\varphi(0)|\ge\frac{\omega_2}{2}. 
        \end{equation}
        The inequality~\eqref{phi_const} for small enough $\omega_2$ leads to
        the global bound
        \begin{equation}\label{phi_const_global}
            \max_{|t|\le 1}|\varphi(t)-\varphi(0)|\le \frac{\omega_*}{2}.
        \end{equation}
        Hence $f$ can be approximated by $f(0)$ and the algorithm stops.
        In the case of~\eqref{delta_bound} we proceed to the next step.

    \item \label{step_procedure_embed}
        The procedure of the search for the embedding $\wphi_{\gamma_1}\hookrightarrow\wphi_{\gamma_2}$.

        We do not know the values of  $v_\gamma$ but a simple idea 
        helps us to approximate the ratio $|v_{\gamma_2}|/|v_{\gamma_1}|$ for
        any pair of vectors  $\gamma_1,\gamma_2$. Namely, if
        $|v_{\gamma_2}|\ge|v_{\gamma_1}|$, then from~(\ref{phi_approx})  it
        follows that $\wphi_{\gamma_1}(t)\approx \wphi_{\gamma_2}(\pm t/\lambda)$
        for some $\lambda\ge 1$. If this approximate equality holds then we
        call it an ``embedding''
        $\wphi_{\gamma_1}\hookrightarrow\wphi_{\gamma_2}$.
        We show that if $v_{\gamma_1}$ is typical, the corresponding $\lambda$ can be found
        with high accuracy and
        $|\widetilde{\lambda}(\wphi_{\gamma_1},\wphi_{\gamma_2})-\frac{|v_{\gamma_2}|}{|v_{\gamma_1}|}|\le\omega_3$.

    \item \label{step_typical} 
       The search for a typical $v$. The goal of this step is to find an index  $i_0$, such that 
        $v_{i_0}$ is typical and $|v_{i_0}|\le 3/4$.
        We find all possible pairwise embeddings
        $\wphi_i\hookrightarrow\wphi_j$  for the set of functions
        $\{\wphi_i\}$. That allows us to compare (approximately) pairs $|v_i|$,
        $|v_j|$ and analyze the order statistics of the set $\{|v_i|\}$ in order to find
        $v_{i_0}$.

    \item \label{step_vossta} The recovery of the vector  $a$.  Using the function
        $\wphi_{i_0}$ from the previous step we construct embeddings
        $\wphi_{i_0}\hookrightarrow \wphi_{\gamma}$ for appropriate vectors
        $\gamma$ (linear combinations of standard basis vectors  $e_k$), and we find
        approximate values of  $a_k/|v_{i_0}|$, where  $a_k$ are the coordinates
        of  $a$. As a result we obtain the approximation $\widetilde{a}$ to the
        vector  $a$ with error      
        \begin{equation}\label{vosst_a}
            |a-\widetilde{a}|\le C\omega_3 \le \frac{\omega_*}{2L_\sigma}.
        \end{equation}

    \item \label{step_vesotrezok}
        The recovery of  $\varphi$. The good approximation for  $a$ allows us to approximate 
        $\varphi(t)$ for any  $t\in[-1,1]$: $\varphi(t)\approx
        f(t\widetilde{a})$. We can compute $\varphi$
        on a sufficiently fine uniform grid of size $2N_3+1$ in  $[-1,1]$ and apply the technique
        of~\cite{DT19} to approximate  $\varphi$ by the polynomial of degree
        $M_1$. As a result we get
        \begin{equation}
            \label{approx_vphi}\max\limits_{|t|\le 1}|\varphi(t)-\wphi(t)|\le
            \frac{\omega_*}2.
        \end{equation}

\end{enumerate}

Finally, we have $\widetilde{f}(x) := \wphi(\langle \widetilde{a},x\rangle)$.
We estimate the error of the approximation, using~\eqref{vosst_a},~\eqref{approx_vphi} and the
Lipschitz property of $\varphi$ :
\begin{multline*}
|f(x)-\widetilde{f}(x)| \le |\varphi(\langle a,x\rangle) -
\varphi(\langle\widetilde{a},x\rangle)| +
|\varphi(\langle\widetilde{a},x\rangle)-\wphi(\langle\widetilde{a},x\rangle)| \le \\
    \le L_\sigma |a-\widetilde{a}| + \frac{\omega_*}{2} \le \omega_*.
\end{multline*}

\section{The algorithm}

In this section we describe the recovery algorithm and estimate its accuracy. The theorem follows from these considerations.  Each subsection corresponds to one step of the algorithm. 

\subsection{The procedure of extrapolation}

We will apply the following useful statement on extrapolation of analytic
functions from their values on a uniform grid. Recall that $E_\rho$ is the
ellipse with focii $\pm1$ and the sum of semi-axes $\rho$.

\begin{lemma}[See.~\cite{DT19}, Corollary 2, 4]\label{lem_DT}
    Let $\psi$ be analytic in  $E_\rho$ and $|\psi(z)|\le Q$ there; let the
    values $y_k=\psi(k/N)+\xi_k$, $k=-N,\ldots,N$ be known with
    accuracy    $|\xi_k|\le\eps$. Let  $p_M$~be the polynomial of degree not
    exceeding  $M$ that minimizes $\sum_{k=-N}^N|p(k/N)-y_k|^2$, and
    $M\le\sqrt{N/2}$. Then:
    \begin{itemize}
        \item[(i)] interpolation: for  $|x|\le 1$, we have
            \begin{equation}\label{pm_int}
                |p_M(x)-\psi(x)|\le CM^{3/2}\left(Q\frac{\rho^{-M}}{\rho-1} +
                \eps\right),
            \end{equation}
            with $C$ an absolute constant;
        \item[(ii)] extrapolation: for  $|x|\in[1,\frac12(\rho+\rho^{-1}))$, we have 
            \begin{equation}\label{pm_ext}
                |p_M(x)-\psi(x)|\le
                C\left(Q\left(\frac{M^{3/2}}{\rho-1}+\frac{r}{1-r}\right)r^M + M^{3/2}(\rho
                r)^M\eps\right),
            \end{equation}
    with $r=(|x|+\sqrt{x^2-1})/\rho$, and  $C$ an absolute constant.
    \end{itemize}
\end{lemma}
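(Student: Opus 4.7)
The plan is to reduce everything to two classical ingredients: first, polynomial approximation theory for analytic functions on an ellipse; and second, a discrete-to-continuous stability estimate for algebraic polynomials of degree $M$ sampled on the uniform grid $\{k/N\colon -N\le k\le N\}$. The role of the assumption $M\le\sqrt{N/2}$ is exactly to make the second ingredient work with a cheap constant.

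The first step is to replace $\psi$ by its best polynomial approximant. Since $\psi$ is analytic in $E_\rho$ with $|\psi|\le Q$, by Bernstein/Chebyshev theory there is a polynomial $q_M$ of degree $\le M$ with $\|\psi-q_M\|_{L_\infty[-1,1]}\le C Q\rho^{-M}/(\rho-1)$, and moreover on the larger ellipse $E_\rho$ itself the remainder $\psi-q_M$ remains controlled. Then $p_M-q_M$ is itself a polynomial of degree $\le M$, and on the grid it satisfies $|p_M(k/N)-q_M(k/N)|\lesssim \eps + Q\rho^{-M}/(\rho-1)$ in an $\ell^2$-averaged sense, because $p_M$ is the least-squares minimizer and $q_M$ is an admissible competitor whose grid residuals are bounded by the noise plus the approximation error.

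The second step is the discrete stability bound. For any polynomial $p$ of degree $M\le\sqrt{N/2}$ one has
\begin{equation*}
\|p\|_{L_\infty[-1,1]} \le C M^{3/2}\Bigl(\tfrac1{2N+1}\sum_{k=-N}^{N}|p(k/N)|^2\Bigr)^{1/2},
\end{equation*}
which is a Markov/Bernstein-type inequality combined with the fact that the discrete inner product on a sufficiently dense uniform grid is comparable to $\int_{-1}^{1}$ on the space of polynomials of degree $2M$. Applying this to $p=p_M-q_M$ and combining with the first step yields the interpolation bound (i). For the extrapolation bound (ii), I would then transport the $L_\infty[-1,1]$ control of $p_M-q_M$ outward to the point $x$ using the Bernstein growth lemma: any polynomial of degree $M$ bounded by $1$ on $[-1,1]$ is bounded by $\rho_x^M$ at a point $x$ whose Bernstein parameter is $\rho_x=|x|+\sqrt{x^2-1}$. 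Since $x$ lies inside $E_\rho$, we have $\rho_x<\rho$ so $r=\rho_x/\rho<1$; multiplying the two pieces of the interpolation bound by $\rho_x^M=(\rho r)^M$ gives exactly the stated $r^M$ and $(\rho r)^M\eps$ terms, plus the harmless $r/(1-r)$ factor coming from bounding $\|\psi-q_M\|$ on $E_{\rho_x}$ (rather than on $[-1,1]$) through a geometric-series tail of the Chebyshev coefficients.

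The main obstacle, and the reason this is not just a one-line application of Bernstein's theorem, is controlling the stability constant of discrete least squares on the uniform (not Chebyshev) grid: uniform nodes are notoriously bad for polynomial interpolation, so one must both (a) restrict $M$ to $\sqrt{N/2}$ and (b) work with the least-squares projection rather than direct interpolation in order to damp Runge-type growth. The explicit $M^{3/2}$ factor and the tightness of the constraint $M\le\sqrt{N/2}$ are the delicate part; everything else reduces to standard Bernstein-on-the-ellipse estimates, and beyond that I would simply cite the detailed analysis in \cite{DT19}.
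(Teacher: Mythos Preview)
Your proposal is correct and in fact goes beyond the paper's own proof, which consists solely of pointing to Corollaries~2 and~4 (together with Theorems~3 and~4 for the singular-value estimates) in~\cite{DT19}. The two-ingredient outline you give---Bernstein approximation on $E_\rho$ plus the discrete least-squares stability bound under $M\le\sqrt{N/2}$, then Bernstein growth for the extrapolation step---is exactly the content of those cited results, so there is nothing to add.
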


\begin{proof}
    The case  (i) corresponds to Corollary~2 from~\cite{DT19}, and the case
    (ii) to Corollary~4.    We apply
    Theorems 3 and 4 from~\cite{DT19} in order to obtain necessary estimates for singular numbers.
\end{proof}

Let us recall the setup of our algorithm. We recover an unknown function
$f(x)=\varphi(\langle a,x\rangle)$ with analytic $\varphi\in H(\sigma,1)$,
from its noisy evaluations of accuracy $\eps$.

\begin{procedure}[\texttt{extrapolation}]
Given any vector $\gamma\in B_2^n$, we receive the values $y_k\approx
    f(\gamma\frac{k}{N_1})$, $k=-N_1,\ldots,N_1$, take the polynomial $p_M$ of
    degree $\le M$ that minimizes $\sum_{k=-N_1}^{N_1}|p_M(k/N_1)-y_k|^2$, and
output the function $\wphi_\gamma(t):=p_M(n^{1/2}t)$.
\end{procedure}

\begin{proposition}
    Suppose that $N_1=2M^2$ and the inequalities hold:
    \begin{equation}\label{M_cond}
        M \ge C(\sigma)\log \omega_1^{-1},
    \end{equation}
    \begin{equation}\label{eps_cond}
        \log(1/\eps)\ge C(\log\omega_1^{-1} + M\log n).
    \end{equation}
    Then the function $\wphi_\gamma$ given by the extrapolation Procedure
    satisfies the inequality~(\ref{phi_approx}).
\end{proposition}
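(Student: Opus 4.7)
The plan is a reduction to a direct application of Lemma~\ref{lem_DT} via a linear change of variables. Setting $\beta := \langle a,\gamma\rangle$ (so $v_\gamma = n^{1/2}\beta$) and $\psi(s) := \varphi(\beta s)$, the grid evaluations become $y_k = \psi(k/N_1) + \xi_k$ with $|\xi_k|\le\eps$, and by construction $\wphi_\gamma(t) = p_M(n^{1/2}t)$ while $\varphi(v_\gamma t) = \psi(n^{1/2}t)$. Thus the desired inequality \eqref{phi_approx} on $|t|\le\min(1,\sigma/(2|v_\gamma|))$ becomes a bound on $|p_M(s)-\psi(s)|$ for $|s|\le R := \min(n^{1/2},\sigma/(2|\beta|))$, which is exactly what Lemma~\ref{lem_DT} is built to give.

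Next I would choose the ellipse parameter $\rho$. The prescription $\rho-\rho^{-1} = 2\sigma/|\beta|$ (interpreting $\rho=\infty$ when $\beta=0$) makes the scaled ellipse $\beta E_\rho$ have semi-axes $\sqrt{\beta^2+\sigma^2}$ and $\sigma$, so it fits inside $\Pi_\sigma$ (using $|\beta|\le 1$), whence $|\psi|\le 1$ on $E_\rho$ and Lemma~\ref{lem_DT} applies with $Q=1$. A short computation in both regimes $|v_\gamma|\le\sigma/2$ and $|v_\gamma|>\sigma/2$ shows that $R\le(\rho+\rho^{-1})/2 = \sqrt{1+(\sigma/|\beta|)^2}$, so the extrapolation range in Lemma~\ref{lem_DT}(ii) covers all of $|s|\le R$; moreover the quantity $r = (|s|+\sqrt{s^2-1})/\rho$ satisfies $r\le 1/2$ and $\rho r\le 2n^{1/2}$ uniformly throughout $|s|\le R$.

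With these uniform bounds the estimate \eqref{pm_ext} collapses (for $1\le|s|\le R$) to something of the form $C(\sigma)M^{3/2}2^{-M} + CM^{3/2}(2n^{1/2})^M\eps$, and the interpolation regime $|s|\le 1$ handled by \eqref{pm_int} is even better. Requiring this to be $\le\omega_1$ is precisely what the hypotheses provide: condition \eqref{M_cond} dominates the first summand (after taking $\log$), and condition \eqref{eps_cond} dominates the second. The constraint $M\le\sqrt{N_1/2}$ needed by Lemma~\ref{lem_DT} is saturated with equality by the choice $N_1=2M^2$.

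The main technical point is the uniform treatment of the two regimes for $|v_\gamma|$. When $|v_\gamma|$ is large the desired extrapolation range in the $s$-variable is short but the analyticity ellipse is narrow ($\rho$ close to $1$); when $|v_\gamma|$ is small the desired range stretches up to $n^{1/2}$, but the analyticity ellipse is correspondingly large. The single prescription $\rho-\rho^{-1} = 2\sigma/|\beta|$ balances these competing effects and produces the same $r\le 1/2$ bound in both cases, which is what allows the final error estimate to close with the same pair of hypotheses~\eqref{M_cond} and~\eqref{eps_cond}.
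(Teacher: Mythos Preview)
Your proof is correct and follows the paper's overall strategy: change variables to $s=n^{1/2}t$, pick an ellipse $E_\rho$ on which $\psi$ is analytic and bounded, verify $r\le 1/2$ and $\rho r\le 2n^{1/2}$ on the required range, and invoke Lemma~\ref{lem_DT}. The difference is purely organizational. The paper splits into two cases according to whether $\sigma/|\langle a,\gamma\rangle|\ge 2$ or not: in the first (extrapolation) case it takes $\rho_1=R_1+\sqrt{R_1^2-1}$ with $R_1=\min(\sigma/|\langle a,\gamma\rangle|,2n^{1/2})$ (coming from a disk in which $\psi$ is analytic), and in the second (pure interpolation) case it simply takes $\rho=1+\sigma$. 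Your single prescription $\rho-\rho^{-1}=2\sigma/|\beta|$ handles both regimes at once; it automatically gives $\rho-1\ge\sigma$ (so the $1/(\rho-1)$ factors are harmless) and yields the same $r\le 1/2$, $\rho r\le 2n^{1/2}$ bounds, at the price of checking that the scaled ellipse $\beta E_\rho$ sits in $\Pi_\sigma$ rather than in a disk. The resulting conditions on $M$ and $\eps$ are identical.
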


\begin{proof}
In our procedure we evaluate values of the function
$\psi(z)=\varphi(uz)$, $u:=\langle a,\gamma\rangle$, on the uniform grid
$\{k/N_1\}_{k=-N_1}^{N_1}$ in $[-1,1]$. One can rewrite the target
    inequality~\eqref{phi_approx} in the following way:
    \begin{equation}
        \label{psi_approx}
    |p_M(x)-\psi(x)|\le\omega_1,\quad |x|\le\min(n^{1/2},R/2),\quad
    R:=\sigma/|u|.
    \end{equation}

We start with the case $R\ge 2$.
The function  $\psi$ is analytic in the disk of radius $R$. Therefore it is
    analytic in the disk of radius  $R_1=\min(R,2n^{1/2})$ and in the set
    $E_{\rho_1}$, $\rho_1=R_1+\sqrt{R_1^2-1}$. We need 
    an estimate valid for $|x|\le R_1/2$. For such  $x$ we have 
$$
r=\frac{|x|+\sqrt{x^2-1}}{\rho_1} \le \frac{R_1/2+\sqrt{R_1^2/4-1}}{R_1+\sqrt{R_1^2-1}}
\le \frac12.
$$
    In the (more difficult) case of extrapolation we apply~(\ref{pm_ext}) for
    $\rho_1$, $Q=1$, $1\le |x|\le R_1/2$, $r\le1/2$. Since $\rho_1\ge R_1 \ge 2$ and  $\rho_1 \le 2R_1 \le
4n^{1/2}$, we get
$$
|p_M(x)-\psi(x)|\le CM^{3/2}(2^{-M}+(\rho_1/2)^M\eps) \le
CM^{3/2}(2^{-M}+(4n)^{M/2}\eps).
$$
We want each summand not to exceed  $\omega_1/2$. Therefore the  first summand
    imposes the condition on $M$: $CM^{3/2}2^{-M}\le\omega_1/2$, that holds under~\eqref{M_cond}.
    The second summand imposes the condition~(\ref{eps_cond}).
The estimate in the case  $|x|\le 1$ (interpolation) can be done similarly, the conditions on  $M$ and 
$\eps$ are weaker than in the extrapolation case.

Let us  consider the case  $R<2$. Since we have to estimate $|p_M(x)-\psi(x)|$
    for $|x|\le \min(\sqrt{n},R/2)<1$, we apply the interpolation error
    estimate~(\ref{pm_int}). Since $E_{1+\sigma}\subset\Pi_\sigma$, we see that
    $\varphi$ is analytic and bounded in $E_{1+\sigma}$; hence  as $|u|\le 1$,
    the  function $\psi$ is also analytic and
    bounded in this set. Using the inequality~(\ref{pm_int}) for
    $\rho=1+\sigma$, we get  the condition on  $M$:
    $CM^{3/2}(\sigma^{-1}(1+\sigma)^{-M}+\varepsilon) \le \omega_1/2$ that leads to~(\ref{M_cond}).
    The requirement on  $\eps$ is weaker than~(\ref{eps_cond}).

    The inequality~\eqref{psi_approx} is proven.
\end{proof}

\subsection{The construction of $\wphi_i$}

We take  $N_2$ random  vectors (uniformly) on the unit sphere:
$\gamma^1,\ldots,\gamma^{N_2} \in S^{n-1}$. For each $\gamma^i$, using the
procedure described above we construct the function $\wphi_i :=
\wphi_{\gamma^i}$, that approximates  $\varphi(v_it)$,
where $v_i:=v_{\gamma^i}=n^{1/2}\langle a,\gamma^i\rangle$. 

\paragraph{The statistics of $v_i$.}

Let us consider the sequence $v_i$.
We trivially have $|v_i|\le n^{1/2}$.
We do not know the values of  $v_i$ but we know their distribution:
$\{v_i\}_{i=1}^{N_2}$ is a sample from the random variable 
$V=n^{1/2}X_1$, where the vector  $X=(X_1,\ldots,X_n)$
is uniformly distributed on the sphere $S^{n-1}$.  Let $F^n$
be the distribution function of the r.v. $|V|$; $\widehat{F}^n_{N_2}$ be the empirical
distribution function of the sequence $|v_1|,\ldots,|v_{N_2}|$ (a
sample from the distribution $F^n$).

The distribution of $V$ is close to the  gaussian distribution:
$\mathsf{Law}(V)\approx \mathcal N(0,1)$. Let us denote by $\Phi$ and $\Phi^*$
the distribution functions of a standard gaussian variable $\xi$ and the
variable $|\xi|$, respectively.
It is known that (see, e.g.,~\cite{K06})
\begin{equation}\label{distr1}
\sup_{x\in\R}|\Phi^*(x)-F^n(x)|\le c/n.
\end{equation}
We estimate the difference between  $F^n$ and $\widehat{F}^n_{N_2}$ using the 
Dvoretzky-Kiefer-Wolfowitz inequality~\cite{M90}:
$$
\mathsf{P}(\sup_{x\in\R}|F^n(x)-\widehat{F}^n_{N_2}(x)|>\lambda) \le
2\exp(-2N_2\lambda^2).
$$

Let $\lambda=1/300$, then with probability  $1-2\exp(-N_2/45000)$ we have
\begin{equation}\label{distr2}
    \sup_{x\in\R}|F^n(x)-\widehat{F}^n_{N_2}(x)|\le \frac{1}{300}.
\end{equation}

\textbf{From here on we assume that~(\ref{distr2}) holds}.
It happens with probability not less than  $1-\delta_*$ if we take  $N_2\ge
C\ln(2/\delta_*)$.

If $n$ is sufficiently large, then $c/n\le 1/300$ in~\eqref{distr1} and
from~\eqref{distr1},~\eqref{distr2} we get for any $q_2>q_1>0$:
\begin{equation}\label{distr3}
\left|\frac{1}{N_2}\#\{i\colon q_1 \le |v_i| \le q_2\} -
    (\Phi^*(q_2)-\Phi^*(q_1))\right| \le \frac{1}{100}.
\end{equation}

In the proof we will use the following approximate values for $\Phi^*$:
$\Phi^*(2)=0.9544\ldots$, $\Phi^*(1/2)=0.3829\ldots$,
$\Phi^*(0.75)=0.5467\ldots$, $\Phi^*(0.46)=0.3544\ldots$, $\Phi^*(0.62)=0.4647\ldots$,
$\Phi^*(0.57)=0.4313\ldots$.

In particular, from the inequality $\Phi^*(2)-\Phi^*(1/2) > 0.57$, we see:
\begin{equation}\label{half}
    \frac{1}{N_2}\#\{i\colon 1/2 \le |v_i| \le 2\} > 1/2.
\end{equation}
Recall that we call a number $v$ typical, if $b\le|v|\le B$.
The appropriate choice of $b$ and $B$ gives that the number of nontypical  $v_i$ does not exceed 2 percents.
Formally, we assume that the numbers  $b,B$ are such that
\begin{equation}\label{bB_def}
\Phi^*(B)-\Phi^*(b)>\frac{99}{100},
\end{equation}
and therefore we have 
\begin{equation}
\label{bB_prop}
\#\{i\colon b\le |v_i|\le B\}\ge \frac{98}{100} N_2.
\end{equation}

\subsection{The estimation of function oscillation}

We consider the oscillation of the function $\varphi$:
$$
\Delta_h := \max_{|t|\le h} |\varphi(t)-\varphi(0)|.
$$

In order to estimate it we replace  $\varphi$ with  $\wphi_i$, and the maximum
over the segment with the maximum over the grid with some step $\nu>0$:
$$
\widetilde{\Delta}_{h,\nu}^i := \max_{k\colon |k\nu|\le
h}|\wphi_i(k\nu)-\wphi_i(0)|,\quad i=1,\ldots,N_2.
$$

    Let $\widetilde{\Delta}_{h,\nu}^\mathsf{med}$ be the median of
    the sequence $\{\widetilde{\Delta}_{h,\nu}^i\}_{i=1}^{N_2}$.
    Let us prove that if $h\le\sigma/4$ and $\nu$ is sufficiently small, say, $\nu \le
    \omega_1(2L_\sigma)^{-1}$, then the inequality holds:
    \begin{equation}
        \label{median}
    \Delta_{h/2} - 3\omega_1 \le \widetilde{\Delta}_{h,\nu}^\mathsf{med} \le
    \Delta_{2h} + 3\omega_1.
    \end{equation}

    Indeed, let $i$ be such that $1/2 \le |v_i| \le 2$. Since $h\le
    \sigma/4$, the approximation~(\ref{phi_approx}) works for $|t|\le h$ and we can replace  $\wphi_i$ with $\varphi$:
    $$
    |\widetilde{\Delta}^i_{h,\nu} - \max_{k\colon|k\nu|\le
    h}|\varphi(v_ik\nu)-\varphi(0)|| \le 2\omega_1.
    $$
    Using the Lipschitz condition and the inequality  $|v_i|\le 2$, we get
    $$
    |\Delta_{|v_i|h} - \max_{k\colon|k\nu|\le h}|\varphi(v_ik\nu)-\varphi(0)|| \le
    2L_\sigma\nu \le \omega_1
    $$
    the last inequality is true because of the restriction on $\nu$. From this $|\widetilde{\Delta}^i_{h,\nu} -
    \Delta_{|v_i|h}|\le 3\omega_1$.
    Therefore for all $i$ such that $1/2\le |v_i| \le 2$ we have
    $$
    \Delta_{h/2}-3\omega_1 \le \widetilde{\Delta}^i_{h,\nu} \le
    \Delta_{2h}+3\omega_1.
    $$
    Since it holds for  more than a half of all indices  $i$, the last
    inequalities hold also for the median.

We will use~\eqref{median} for $h:=\sigma_1/2$ (recall that
$\sigma_1=\sigma\frac{b}{4B}$) and $\nu:=\omega_1/(2L_\sigma)$.
If $\widetilde{\Delta}^{\mathsf{med}}_{h,\nu} \ge \omega_2 - 3\omega_1$, then
$$
\Delta_{2h}\ge \widetilde{\Delta}^{\mathsf{med}}_{h,\nu} -3\omega_1 \ge \omega_2 - 6\omega_1 \ge
\frac12\omega_2
$$
(we assume $\omega_2 \ge 12\omega_1$).
Therefore we obtain~\eqref{delta_bound} and proceed to the next step of the algorithm.
In the opposite case of $\widetilde{\Delta}^{\mathsf{med}}_{h,\nu} < \omega_2 -
3\omega_1$ we have $\Delta_{\sigma_1/4}\le \omega_2$ and we get~(\ref{phi_const}).
Let us derive the global bound~\eqref{phi_const_global}.

\begin{lemma}
    Let $\psi\in H(\sigma,1)$, $\sigma\in(0,1]$, and
    let $\Delta_{\tau}:=\max_{|t|\le\tau}|\psi(t)|$, $\tau\in(0,1)$. Then the estimate holds
    $$
    \max_{|t|\le 1}|\psi(t)| \le A\Delta_\tau^\alpha,\quad
    \alpha=\alpha(\sigma,\tau)\in(0,1),\;A=A(\sigma,\tau)>0.
    $$
\end{lemma}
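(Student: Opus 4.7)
My plan is to read this lemma as a direct instance of the classical two-constants theorem for subharmonic functions. Since $|\psi|\le 1$ throughout $\Pi_\sigma$, the function $u(z):=\log|\psi(z)|$ is subharmonic with $u\le 0$ on $\Pi_\sigma$, and, by hypothesis, $u\le \log\Delta_\tau\le 0$ on the segment $[-\tau,\tau]$. The task is therefore to interpolate between these two bounds.

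I would then introduce the harmonic measure of the slit $[-\tau,\tau]$ inside the slit rectangle $U:=\Pi_\sigma\setminus[-\tau,\tau]$: let $\omega(z)$ be the bounded harmonic function in $U$ whose boundary values are $1$ on $[-\tau,\tau]$ (approached from either side) and $0$ on $\partial\Pi_\sigma$. Since $\log\Delta_\tau\le 0$, the auxiliary function $u(z)-\omega(z)\log\Delta_\tau$ is subharmonic in $U$ with non-positive boundary values on the entire $\partial U$, so the maximum principle yields
$$
|\psi(z)|\le \Delta_\tau^{\omega(z)}\qquad\text{for all } z\in U.
$$

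To conclude, I need a uniform lower bound $\omega(z)\ge \alpha$ for $z\in[-1,1]\setminus[-\tau,\tau]$. This comes from compactness: the whole segment $[-1,1]$ sits inside $\Pi_\sigma$ at distance at least $\sigma$ from $\partial\Pi_\sigma$, so $\omega$ is strictly positive on it; moreover $\omega$ extends continuously to $[-\tau,\tau]$ with value $1$, so a uniform positive infimum $\alpha=\alpha(\sigma,\tau)\in(0,1)$ exists. On $[-\tau,\tau]$ itself one simply uses the trivial bound $|\psi|\le\Delta_\tau\le\Delta_\tau^\alpha$. This gives the lemma with $A=1$.

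The only genuinely delicate point is verifying that $\omega$ behaves reasonably at the four corners of $\Pi_\sigma$ and at the slit endpoints $\pm\tau$; this is a standard issue in potential theory, and if one prefers to avoid it, it can be bypassed either by slightly thickening $[-\tau,\tau]$ to a smoothly bounded neighborhood (on which $|\psi|\le 2\Delta_\tau$ by the Lipschitz bound $L_\sigma$) and rounding the corners of $\Pi_\sigma$, or by composing with a conformal map of $\Pi_\sigma$ onto the unit disk and invoking Schwarz--Pick. I also note that the more elementary route via Chebyshev expansion on the Bernstein ellipse $E_{1+\sigma}$ followed by Chebyshev's $T_N(1/\tau)$-type growth estimate \emph{fails} in the regime of interest, because the relevant growth rate $1/\tau+\sqrt{1/\tau^2-1}$ exceeds $1+\sigma$ as soon as $\tau\ll\sigma$---which is precisely the situation in which the lemma will be applied ($\tau=\sigma_1/4$).
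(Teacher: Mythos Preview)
Your harmonic-measure/two-constants argument is correct and even delivers the lemma with $A=1$. The paper, however, takes a genuinely different and more elementary route: it approximates $\psi$ on $[-1,1]$ by a polynomial $p_M$ with error $\le\Delta_\tau$ (via Bernstein's bound $\asymp(1+\sigma)^{-M}$ in the ellipse $E_{1+\sigma}$), then applies Chebyshev's inequality
\[
\|p_M\|_{C[-\tau_1,\tau_1]}\le T_M(\tau_1/\tau)\,\|p_M\|_{C[-\tau,\tau]}
\]
with $\tau_1/\tau=1+c_1\sigma^2$ chosen so small that $T_M(\tau_1/\tau)\le(1+\sigma)^{M/2}$. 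This gives $\Delta_{\tau_1}\le C(\sigma)\Delta_\tau^{1/2}$, and \emph{iterating}---choosing a new $M$ at each step---carries $\tau$ up to $1$.

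Your closing remark that the Chebyshev route ``fails'' is therefore only half right: the one-shot extrapolation from $[-\tau,\tau]$ to $[-1,1]$ does fail when $\tau\ll\sigma$, for exactly the reason you give, but the paper sidesteps this by taking many small multiplicative steps and re-approximating after each one. What your approach buys is a one-line proof, the sharp constant $A=1$, and a transparent geometric meaning for $\alpha$ as a harmonic measure; what the paper's approach buys is that it stays within the real polynomial techniques already used throughout the algorithm and avoids any discussion of boundary regularity for slit domains.
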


\begin{proof}
    It is known that: for functions analytic in $E_\rho$ and bounded there by $1$
    for any $M$ there exists a polynomial $p$ of degree not exceeding $M$,
    such that $\max_{|t|\le 1}|p_M(t)-\psi(t)|\le 2\rho^{-M}/(\rho-1)$ (see \cite{Ber}).
    Since $E_{1+\sigma}\subset \Pi_\sigma$, we can take $\rho=1+\sigma$. 
    In order to make the error of the approximation less than 
    $\Delta_\tau$, we take the minimal $M$ such that
    $(1+\sigma)^{-M}\le \frac{\sigma}{2} \Delta_\tau$.
    Then $|p_M(t)-\psi(t)|\le\Delta_\tau$ and $|p_M(t)|\le 2\Delta_\tau$ for  $|t|\le\tau$.
    
   We use  Chebyshev's inequality in the following form  (see~\cite{N64},
    p.233): let $\tau_1=\tau(1+2q)$, $q>0$, and $P$ be a real algebraic
    polynomial  of degree $M$, then we have
    $$
    \frac{\|P\|_{C[-\tau_1,\tau_1]}}{\|P\|_{C[-\tau,\tau]}} \le T_M(\tau_1/\tau)
    \le(1+2q+2\sqrt{q+q^2})^M.
    $$
     $T_M$ is the classical Chebyshev polynomial of degree $M.$
    If $q\le 1$ then the last expression is less than $(1+5\sqrt{q})^M$; take
    $q=c_1\sigma^2$ to make $(1+5\sqrt{q})\le(1+\sigma)^{1/2}$ (it holds for sufficiently small $c_1$ and $\sigma<2$). Then 
    for our polynomial $p_M$ we obtain
    $$
    \max_{|t|\le \tau_1}|p_M(t)|\le 2\Delta_\tau (1+5\sqrt{q})^M = 2\Delta_\tau
    (1+\sigma)^{M/2} \le C(\sigma)\Delta_\tau^{1/2}.
    $$ 
    Hence $\Delta_{\tau_1}\le C(\sigma)\Delta_\tau^{1/2}$ for $\tau_1/\tau=1+c_1\sigma^2$.
    We iterate this construction, using points  $\tau_1,\tau_2,\ldots$, and
obtain the required estimate on $\max_{|t|\le 1}|\psi(t)|$.
\end{proof}


Applying the lemma to the function $\frac12(\varphi(t)-\varphi(0))$, we get the estimate~(\ref{phi_const_global}) under condition
\begin{equation}\label{omega2_cond}
    A(\sigma,\sigma_1/4)(\omega_2/2)^{\alpha(\sigma,\sigma_1/4)} \le \omega_*/4.
\end{equation}

So, the algorithm stops with the approximation $f\approx f(0)$ (here we require
that $\eps\le \omega_*/2$).

\subsection{The procedure of search for the embedding $\wphi_{\gamma_1}\hookrightarrow\wphi_{\gamma_2}$}

\begin{definition}
We say that function $h_1$ \textit{embeds} into $h_2$ with coefficient
$\lambda\ge 1$ and accuracy $\delta\ge 0$
(the notation is: $h_1\embed{\lambda}{\delta}h_2$), when
$$
\max_{|t|\le 1} |h_1(t)-h_2(t/\lambda)|\le \delta.
$$

We can similarly  define the embedding $h_1\embednet{\lambda}{\delta}{\nu}h_2$
replacing the maximum over $[-1,1]$ with the maximum over the grid of
step $\nu>0$:
$$
\max_{k\colon|k\nu|\le 1}|h_1(k\nu)-h_2(k\nu/\lambda)|\le \delta.
$$
\end{definition}

\paragraph{General embedding.}

Let us consider the following general situation. 

Let $g$ be some function on $[-R,R]$ and $v_1,v_2>0$ be  reals.
We suppose that we have some approximations $\widetilde{g}_i$ to the functions $g_i(t):=g(v_it)$:
\begin{equation}\label{g_i}
|\widetilde{g}_i(t)-g_i(t)|\le \omega \quad\mbox{for $|t|\le \min(1,R/v_i)$},\quad
i=1,2.
\end{equation}
We assume the functions $\widetilde{g}_i$ to be defined on $[-1,1]$.
The condition $v_i|t|\le R$ is natural since the function $g_i$ is defined for such
$t$.

For any $v_1\le R$ and any $v_2\ge v_1$ we have the embedding 
$g_1\embed{\lambda^\circ}{0}g_2$, where $\lambda^\circ =
v_2/v_1$. Indeed, for
$|t|\le 1$ we have  $|v_1t|\le R$, and therefore the functions  $g_1(t)$ and
$g_2(t/\lambda^0)$ are correctly defined and
$$
g_1(t) = g(v_1t) =g(v_2t/\lambda^0) = g_2(t/\lambda^0).
$$
The coefficient of embedding $\lambda$ is uniquely defined in the case of a
continuous function $h$. It follows from the following
simple fact: if $h$ is continuous and $h(\theta s)\equiv h(s)$ for some
$\theta\in(0,1)$, then $h(s)\equiv h(0)$.
We need the quantitative analogue of this fact when the accuracy of our
embedding is non-zero.

\begin{lemma}
    \label{lem_uniq}
    Let $h\colon[-r,r]\to\mathbb C$ be an $L$--Lipschitz function: $|h(s)-h(t)|\le L|s-t|$
    and $\Delta:=\max_{|t|\le r}|h(t)-h(0)|$. If
    $\theta\in(0,1)$ and
    $$
    \max_{|t|\le r}|h(t)-h(\theta t)|\le\delta\le\Delta/2,
    $$
    then    $$
    |1 - \theta| < \frac{\ln(2Lr/\Delta)}{\lfloor \frac{\Delta}{2\delta} \rfloor}.
    $$
\end{lemma}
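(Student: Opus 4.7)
\medskip

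The plan is to iterate the hypothesis: applying it at the points $t,\theta t,\theta^2 t,\ldots$, which all lie in $[-r,r]$ because $\theta\in(0,1)$, a telescoping sum yields
$$
|h(t)-h(\theta^k t)|\le\sum_{j=0}^{k-1}|h(\theta^j t)-h(\theta^{j+1}t)|\le k\delta
$$
for every $|t|\le r$ and every positive integer $k$. Set $K:=\lfloor \Delta/(2\delta)\rfloor$; by the hypothesis $\delta\le\Delta/2$, so $K\ge 1$, and by construction $K\delta\le\Delta/2$.

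Next I would pick a point $t^*\in[-r,r]$ achieving the oscillation, so $|h(t^*)-h(0)|=\Delta$. From the telescoping bound applied with $k=K$ and a reverse triangle inequality,
$$
|h(\theta^K t^*)-h(0)|\ge |h(t^*)-h(0)|-|h(t^*)-h(\theta^K t^*)|\ge \Delta-K\delta\ge \tfrac{\Delta}{2}.
$$
On the other hand the Lipschitz property gives
$$
|h(\theta^K t^*)-h(0)|\le L|\theta^K t^*|\le L r\,\theta^K.
$$
Combining these two estimates yields $\theta^K\ge \Delta/(2Lr)$, hence
$$
K\ln(1/\theta)\le \ln(2Lr/\Delta).
$$

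Finally, for $\theta\in(0,1)$ one has the strict inequality $1-\theta<-\ln\theta=\ln(1/\theta)$, so
$$
1-\theta<\ln(1/\theta)\le\frac{\ln(2Lr/\Delta)}{K},
$$
which is exactly the required bound. No step is really an obstacle here; the only thing to watch is that the iteration stays in $[-r,r]$ (guaranteed by $\theta\in(0,1)$) and that the strict inequality in the conclusion is produced cleanly, which is why I invoke $1-\theta<-\ln\theta$ rather than $1-\theta\le-\ln\theta$. Note also that the argument implicitly assumes $\Delta<2Lr$ (otherwise the right-hand side is non-positive, but then $\Delta\le 2Lr$ combined with $|h(t^*)-h(0)|\le Lr$ forces $\Delta=Lr$ and the estimate becomes trivial or vacuous depending on $\theta$).
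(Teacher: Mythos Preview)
Your argument is correct and matches the paper's proof essentially step for step: both iterate the approximate self-similarity $|h(t)-h(\theta t)|\le\delta$, telescope to bound $|h(\hat t)-h(\theta^k\hat t)|$ at a maximizer $\hat t$, invoke the Lipschitz estimate $|h(\theta^k\hat t)-h(0)|\le Lr\theta^k$, and finish via $1-\theta<-\ln\theta$. Your closing caveat is unnecessary, since the Lipschitz condition already gives $\Delta\le Lr$, so $2Lr/\Delta\ge 2$ and the right-hand side of the claimed bound is always positive.
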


\begin{proof}
    Let $\Delta=|h(\hat{t})-h(0)|$ for some $\hat t$. We see that
$$
    \Delta=|h(\hat{t})-h(0)|\le
    \sum_{j=0}^{k-1}|h(\theta^{j+1}\hat{t})-h(\theta^j\hat{t})| + 
    |h(\theta^k \hat{t})-h(0)| \le k\delta + Lr\theta^k.
$$
Setting $k=\lfloor \Delta / (2\delta) \rfloor$ we obtain
$\theta^k \ge \Delta/(2Lr)$ and
$$
    \ln(\Delta/(2Lr)) \le k\ln\theta = k\ln(1-(1-\theta))\le -k(1-\theta).
$$
\end{proof}

We can now state

\begin{lemma}
    \label{lem_embed}
    Let $0<r<R$ and $g\colon[-R,R]\to\mathbb R$ be an $L$--Lipschitz
    function, such that the condition~(\ref{g_i}) holds for some $v_1,v_2,\omega>0$; let $\lambda^\circ := v_2/v_1$.
    Suppose that $v_1\in[r,R/2]$.
    Then for any $\lambda\ge 1$ we have:
    \begin{itemize}
        \item[(i)] If $v_2\ge v_1$ and $|\lambda -
            \lambda^\circ|\le\min(\frac{\omega}{RL},\frac12)$,
            then
            $\widetilde{g}_1\embed{\lambda}{3\omega}\widetilde{g}_2$.
        \item[(ii)] If
            $\widetilde{g}_1\embednet{\lambda}{3\omega}{\nu}\widetilde{g}_2$,
            $L\nu\max(v_1,v_2)\le \omega$, and
            $$
            \Delta_r:=\max_{|t|\le r}|g(t)-g(0)| \ge 14\omega,
            $$
            then
            $$
            \frac{|\lambda-\lambda^\circ|}{\max(\lambda,\lambda^\circ)} \le
            28\omega\Delta_r^{-1}\ln(2Lr/\Delta_r).
            $$
    \end{itemize}
\end{lemma}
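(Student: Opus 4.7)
For part (i), I would first verify that both approximations in \eqref{g_i} apply at $t$ and $t/\lambda$ for $|t|\le 1$. Since $v_1\le R/2$, the first inequality $|\widetilde g_1(t)-g(v_1 t)|\le\omega$ holds throughout $|t|\le 1$; for the second, the hypotheses $\lambda\ge 1$, $|\lambda-\lambda^\circ|\le 1/2$, and $v_1\le R/2$ together imply $v_2/\lambda\le 3v_1/2\le 3R/4$, so $|t/\lambda|\le\min(1,R/v_2)$ and $|\widetilde g_2(t/\lambda)-g(v_2 t/\lambda)|\le\omega$ as well. The triangle inequality then reduces the problem to estimating $|g(v_1 t)-g(v_2 t/\lambda)|$, which by the Lipschitz property of $g$ is bounded by $L|t|\, v_1|\lambda-\lambda^\circ|/\lambda\le L(R/2)|\lambda-\lambda^\circ|\le\omega/2$, giving the desired $\le 3\omega$.

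For part (ii), I would set $\theta:=\min(\lambda,\lambda^\circ)/\max(\lambda,\lambda^\circ)\in(0,1]$, so that $1-\theta$ is exactly the quantity $|\lambda-\lambda^\circ|/\max(\lambda,\lambda^\circ)$ to be estimated. The plan is to establish the uniform estimate
\[
\max_{|s|\le r}|g(s)-g(\theta s)|\le 6\omega
\]
and then invoke Lemma 1 applied to $h(t):=g(t)-g(0)$ (which is $L$-Lipschitz on $[-r,r]$ with $\max_{|t|\le r}|h(t)|=\Delta_r$) with $\delta:=6\omega$; the hypothesis $\Delta_r\ge 14\omega$ secures the requirement $\delta\le\Delta/2$. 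The resulting bound $|1-\theta|<\ln(2Lr/\Delta_r)/\lfloor\Delta_r/(12\omega)\rfloor$ then yields the claimed estimate $|1-\theta|\le 28\omega\Delta_r^{-1}\ln(2Lr/\Delta_r)$: when $\Delta_r\le 28\omega$ it is immediate because $\lfloor\Delta_r/(12\omega)\rfloor\ge 1$ and $28\omega/\Delta_r\ge 1$, while when $\Delta_r>28\omega$ one has $\lfloor\Delta_r/(12\omega)\rfloor\ge\Delta_r/(12\omega)-1\ge\Delta_r/(28\omega)$.

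To prove the uniform bound I would split cases according to whether $\lambda\ge\lambda^\circ$ or $\lambda<\lambda^\circ$, which identifies which of the two arguments $v_1 k\nu$ and $v_2 k\nu/\lambda$ is the larger; writing these as $\alpha$ and $\theta\alpha$, the variable $\alpha$ ranges over a grid of step at most $\max(v_1,v_2/\lambda)\nu\le\max(v_1,v_2)\nu$ (using $\lambda\ge 1$). Combining the grid embedding $\widetilde g_1\embednet{\lambda}{3\omega}{\nu}\widetilde g_2$ with the two approximations \eqref{g_i} gives $|g(\alpha)-g(\theta\alpha)|\le 5\omega$ on this grid, and the assumption $L\nu\max(v_1,v_2)\le\omega$ then allows Lipschitz interpolation to arbitrary $|s|\le r$ (exploiting that $\theta\le 1$) at an extra cost of at most $\omega$.

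The main technical obstacle is the domain bookkeeping in part (ii): the hypotheses $v_1\in[r,R/2]$ and $\lambda\ge 1$ must be used in concert to guarantee both that the pushed-forward grid points actually cover $[-r,r]$ and that $|k\nu|\le\min(1,R/v_1)$ and $|k\nu/\lambda|\le\min(1,R/v_2)$ hold on the relevant range, so that \eqref{g_i} can be legitimately invoked there. The case $\lambda<\lambda^\circ$ is the delicate one, since then $v_2/\lambda>v_1$ and the grid $\{v_2 k\nu/\lambda\}$ could a priori overshoot $[-R,R]$; the saving fact is that for $|\alpha|\le r$ one only needs $|k\nu|\le r\lambda/v_2=(r/v_1)(\lambda/\lambda^\circ)\le r/v_1\le 1$, which simultaneously places everything inside the admissible range for both approximations.
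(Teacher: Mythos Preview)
Your proposal follows essentially the same route as the paper's proof: the same triangle-inequality decomposition in (i), and in (ii) the same reduction to an estimate $|g(s)-g(\theta s)|\le\mathrm{const}\cdot\omega$ on $[-r,r]$ (via the two case split $\lambda\gtrless\lambda^\circ$) followed by Lemma~\ref{lem_uniq}. One small miscount: the Lipschitz interpolation from the $\alpha$-grid to arbitrary $|s|\le r$ costs up to $2\omega$, not $\omega$, since you must correct at both $s$ and $\theta s$ (each contributes $\le L\cdot\mathrm{step}\le\omega$); this gives $7\omega$ rather than $6\omega$, exactly the paper's constant, and the final bound still follows because $\lfloor\Delta_r/(14\omega)\rfloor\ge\Delta_r/(28\omega)$ whenever $\Delta_r\ge 14\omega$.
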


\begin{proof}
    (i). 
    Let $t\in[-1,1]$.
    We apply the triangle inequality:
    \begin{multline*}
    |\widetilde{g}_1(t)-\widetilde{g}_2(t/\lambda)| \le
    \underbrace{|\widetilde{g}_1(t)-g_1(t)|}_{\le\omega} +
    \underbrace{|g_1(t)-g_2(t/\lambda^\circ)|}_{=0}+\\
        +\underbrace{|g_2(t/\lambda^\circ)-g_2(t/\lambda)|}_{\le
        Lv_2|1/\lambda-1/\lambda^\circ|\le\omega}+
        \underbrace{|g_2(t/\lambda)-\widetilde{g}_2(t/\lambda)|}_{\le\omega}
        \le 3\omega.
    \end{multline*}
    The first term is bounded by $\omega$ since~\eqref{g_i} holds and
    $\min(1,R/v_1)=1$. Next,
    $g_2(t/\lambda^\circ)$ is defined and equals
    $g(v_1t)=g_1(t)$.
    The third term is estimated using Lipschitz condition:
    $$
    |g(v_2t/\lambda^\circ)-g(v_2t/\lambda)| \le Lv_2|1/\lambda-1/\lambda^\circ| =
    \frac{Lv_2}{\lambda^\circ}\cdot\lambda^{-1}|\lambda-\lambda^\circ| \le
    \frac{LR}{2}\lambda^{-1}\frac{\omega}{RL} < \omega.
    $$
    Finally, note that $g_2(t/\lambda)$ is defined and the approximation
    $g_2\approx\widetilde{g}_2$ holds: indeed,
    we have $\lambda\ge\frac12\lambda^\circ$ (as $\lambda^\circ\ge 1$ and
    $|\lambda-\lambda^\circ|\le\frac12$), so
    $$
    v_2|t|/\lambda \le 2v_2/\lambda^\circ = 2v_1 \le R.
    $$

    (ii). Consider the grid where we have the embedding:
    $\{t_k=k\nu\colon |k\nu|\le 1\}$. Pick some $t\in[-1,1]$. One can find a
    point $t_k$ in the grid such that $|t_k|\le |t|$ and $|t-t_k|\le\nu$. As in
    (i), we have a chain of  inequalities:
    \begin{multline*}
    |g_1(t)-g_2(t/\lambda)| \le \underbrace{|g_1(t)-g_1(t_k)|}_{\le
        Lv_1\nu\le\omega} +
        \underbrace{|g_1(t_k)-\widetilde{g}_1(t_k)|}_{\le\omega} + 
        \underbrace{|\widetilde{g}_1(t_k)-\widetilde{g}_2(t_k/\lambda)|}_{\le3\omega} + \\
        +\underbrace{|\widetilde{g}_2(t_k/\lambda)-g_2(t_k/\lambda)|}_{\le\omega}
        +\underbrace{|g_2(t_k/\lambda)-g_2(t/\lambda)|}_{\le
        Lv_2\nu/\lambda\le\omega} \le 7\omega.
    \end{multline*}
    The third term is estimated by the definition of the embedding 
    $\widetilde{g}_1\embednet{\lambda}{3\omega}{\nu}\widetilde{g}_2$.
    The bounds for the two last terms are valid whenever the expressions
    $g_2(t_k/\lambda)$ and $g_2(t/\lambda)$ are defined. For this, we need that
    $v_2|t|/\lambda \le R$. So, we have obtained the following inequality:
    \begin{equation}\label{selfsim}
    |g(v_1t)-g(v_2t/\lambda)|\le 7\omega\quad\mbox{for $|t|\le\min(1,R\lambda/v_2)$.}
    \end{equation}
    We consider two cases: $\lambda>\lambda^\circ$ and $\lambda<\lambda^\circ$
    (if lambdas are equal, there is nothing to prove). If
    $\lambda>\lambda^\circ$, we set $\theta=\lambda^\circ/\lambda<1$,
    $s=v_1t$, and from~(\ref{selfsim}) we obtain
    $$
    |g(s)-g(\theta s)|\le 7\omega
    $$
    when $|s|\le v_1\min(1,R\lambda/v_2)$.
    Since $R\lambda/v_2>R\lambda^\circ/v_2=R/v_1>1$, the above inequality  is
    true for $|s|\le v_1$, hence for all $|s|\le r$.

    In the second case, $\lambda<\lambda^\circ$, the inequality~(\ref{selfsim})
    can be written as
    $$
    |g(\tilde\theta\tilde s)-g(\tilde s)|\le 7\omega,\quad 
    \tilde\theta:=\lambda/\lambda^\circ,\;\tilde s:=v_2t/\lambda,
    $$
    and it holds for
    $$
    |\tilde s|\le \frac{v_2}{\lambda}\min(1,\frac{R\lambda}{v_2})=\min(v_2/\lambda,R).
    $$
    Since $v_2/\lambda>v_2/\lambda^\circ=v_1\ge r$, it holds for $|\tilde s|\le r$.

    Applying in both cases Lemma~\ref{lem_uniq}, we arrive at
    $$
    \frac{|\lambda-\lambda^\circ|}{\max(\lambda,\lambda^\circ)}\le
    \frac{\ln(2Lr/\Delta_r)}{\lfloor \Delta_r/(14\omega)\rfloor} \le
    28\omega\Delta_r^{-1}\ln(2Lr/\Delta_r).
    $$
\end{proof}

Let us apply the general constructions in our setting. The role of the function
$g$ is played by the function $\varphi(\eta t)$, where $\eta:=b\sigma_1^{-1}$; also
let $r:=b$ and $R:=2B$. Note that a typical $v_i$ satisfies $|v_i|\in[r,R/2]$, as
required in Lemma~\ref{lem_embed} and also that we have the
inequality~(\ref{delta_bound}), which bounds the oscillation of $\varphi(\eta t)$
on the segment $[-\eta\sigma_1,\eta\sigma_1]=[-b,b]$, as required. The function
$\varphi(\eta t)$ is $L_\sigma'$--Lipschitz, $L_\sigma':=\eta L_\sigma$.

\begin{procedure}[\texttt{embedding}]
For any pair of functions $\wphi_{\gamma_1}$, $\wphi_{\gamma_2}$ we try to find 
$\lambda\in[1,\lambda^{\mathrm{max}}]$, $\lambda^{\mathrm{max}}:=n^{1/2}b^{-1}$, such that 
$$
\wphi_{\gamma_1}(\eta t)\embednet{\lambda}{3\omega_1}{\nu}
\wphi_{\gamma_2}(\eta t),
$$
    where $\nu=n^{-1/2}(L_\sigma')^{-1}\omega_1$, we search for the parameter $\lambda$
    over the grid in $[1,\lambda^{\mathrm{max}}]$ of  step
    $\omega_1/(BL_\sigma')$. If we obtain an embedding with some $\lambda$, we set
    $\widetilde{\lambda}(\wphi_{\gamma_1},\wphi_{\gamma_2}):=\lambda$ and write
$\wphi_{\gamma_1}\hookrightarrow\wphi_{\gamma_2}$. Otherwise, we similarly try
    to embed $\wphi_{\gamma_1}(-t)$ into $\wphi_{\gamma_2}$ and in the case of
    success we set
    $\widetilde{\lambda}(\wphi_{\gamma_1},\wphi_{\gamma_2}):=\lambda$ and also
    write $\wphi_{\gamma_1}\hookrightarrow\wphi_{\gamma_2}$. If we
    fail to find such $\lambda$, we say that there is no embedding and denote this situation
    as $\wphi_{\gamma_1}\not\hookrightarrow\wphi_{\gamma_2}$.
\end{procedure}

\begin{proposition}
    \label{prop_embed}
    Let $v_{\gamma_1}$ be a typical number, the values $\omega_i$ satisfy the
    inequalities
    \begin{equation}\label{omegas}
        \omega_2\ge28\omega_1,\quad C\omega_1\omega_2^{-1}\ln(4L_\sigma'b/\omega_2) \le
        \omega_3 bn^{-1/2}.
    \end{equation}
    If $|v_{\gamma_2}|\ge |v_{\gamma_1}|$, then 
    $\wphi_{\gamma_1}\hookrightarrow\wphi_{\gamma_2}$. If
    $\wphi_{\gamma_1}\hookrightarrow\wphi_{\gamma_2}$, then 
    $|v_{\gamma_2}|\ge|v_{\gamma_1}|(1-\omega_3)$.
 In both cases 
    $|\widetilde{\lambda}(\wphi_{\gamma_1},\wphi_{\gamma_2})-\frac{|v_{\gamma_2}|}{|v_{\gamma_1}|}|\le\omega_3$.
\end{proposition}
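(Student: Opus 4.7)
The strategy is to reduce everything to Lemma~\ref{lem_embed} via the identification $g(s):=\varphi(\eta s)$ on $[-R,R]$, with $r:=b$, $R:=2B$, and $L:=L_\sigma'=\eta L_\sigma$, together with the approximations $\widetilde{g}_i(t):=\wphi_{\gamma_i}(\eta t)$ to $g_i(t):=g(|v_{\gamma_i}|\,t)$ with error $\omega=\omega_1$ on the relevant intervals (this is the content of~\eqref{phi_approx} after the rescaling by $\eta$). Typicality of $v_{\gamma_1}$ gives $|v_{\gamma_1}|\in[r,R/2]$, which matches the hypothesis of the lemma. The Procedure's dual attempts (using both $\wphi_{\gamma_1}(t)$ and $\wphi_{\gamma_1}(-t)$) let us restrict, without loss of generality, to the case of matching effective signs of $v_{\gamma_1},v_{\gamma_2}$, so that the ideal embedding coefficient is the positive ratio $\lambda^\circ:=|v_{\gamma_2}|/|v_{\gamma_1}|$.

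For the first implication (existence of the embedding), assume $|v_{\gamma_2}|\ge|v_{\gamma_1}|$. Then $\lambda^\circ\in[1,n^{1/2}/b]=[1,\lambda^{\max}]$, so the grid of step $\omega_1/(BL_\sigma')$ contains a point $\lambda$ within $\omega_1/(2BL_\sigma')=\omega_1/(RL)$ of $\lambda^\circ$, comfortably below the threshold $\min(\omega_1/(RL),1/2)$ required in Lemma~\ref{lem_embed}(i). The lemma then produces a continuous embedding $\widetilde{g}_1\embed{\lambda}{3\omega_1}\widetilde{g}_2$, and restricting to the grid $\{k\nu\}$ gives the discrete version $\widetilde{g}_1\embednet{\lambda}{3\omega_1}{\nu}\widetilde{g}_2$ tested by the Procedure. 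Hence $\wphi_{\gamma_1}\hookrightarrow\wphi_{\gamma_2}$.

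For the converse and the error bound on $\widetilde\lambda$, suppose $\widetilde\lambda=\lambda$ is some value output by the Procedure as a valid embedding coefficient. The grid step $\nu=n^{-1/2}(L_\sigma')^{-1}\omega_1$ is exactly calibrated so that $L\nu\max(|v_{\gamma_1}|,|v_{\gamma_2}|)\le L_\sigma'\nu\,n^{1/2}=\omega_1$, verifying the Lipschitz-grid hypothesis of Lemma~\ref{lem_embed}(ii). For the oscillation hypothesis, the quantity $\Delta_r:=\max_{|t|\le b}|g(t)-g(0)|$ coincides (by the chosen $\eta$) with the oscillation of $\varphi$ on a segment containing $[-\sigma_1,\sigma_1]$, so combining~\eqref{delta_bound} with $\omega_2\ge 28\omega_1$ yields $\Delta_r\ge \omega_2/2\ge 14\omega_1$. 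Lemma~\ref{lem_embed}(ii) then delivers
\[
\frac{|\lambda-\lambda^\circ|}{\max(\lambda,\lambda^\circ)}\le 28\omega_1\Delta_r^{-1}\ln(2L_\sigma' b/\Delta_r)\le C\omega_1\omega_2^{-1}\ln(4L_\sigma' b/\omega_2).
\]
By the hypothesis~\eqref{omegas} the right-hand side is at most $\omega_3 bn^{-1/2}$; and since $\lambda,\lambda^\circ\le\lambda^{\max}=n^{1/2}/b$, we conclude $|\widetilde\lambda-\lambda^\circ|\le\omega_3$. Combined with $\widetilde\lambda\ge 1$ this gives $\lambda^\circ\ge 1-\omega_3$, i.e.\ $|v_{\gamma_2}|\ge|v_{\gamma_1}|(1-\omega_3)$; and the bound $|\widetilde\lambda-\lambda^\circ|\le\omega_3$ is precisely the third claim, valid for whatever $\widetilde\lambda$ the Procedure outputs in either direction.

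The main delicate point is the bookkeeping --- matching the grid step $\omega_1/(BL_\sigma')$ to the threshold $\omega_1/(RL)$ in (i), aligning $\Delta_r$ with $\Delta_{\sigma_1}$ under the rescaling by $\eta$, and verifying that the two factors on the right of~\eqref{omegas} absorb both the $\ln(L_\sigma' b/\omega_2)$ from (ii) and the dimension factor $n^{1/2}/b$ coming from $\max(\lambda,\lambda^\circ)$. The sign-reduction through the Procedure's second pass is conceptually straightforward but must be carried out uniformly in both directions. Once these parameters are aligned, no essentially new ideas beyond Lemma~\ref{lem_embed} are needed.
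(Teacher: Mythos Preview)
Your proposal is correct and follows essentially the same route as the paper: the same identification $g(t)=\varphi(\eta t)$ with $r=b$, $R=2B$, $L=L_\sigma'$, $\omega=\omega_1$, the same reduction of signs via the Procedure's second pass, and the same applications of Lemma~\ref{lem_embed}(i) for existence and (ii) for the quantitative bound, concluding via $\max(\lambda,\lambda^\circ)\le\lambda^{\max}=n^{1/2}/b$. If anything you are slightly more explicit than the paper in verifying the grid hypothesis $L\nu\max(|v_{\gamma_1}|,|v_{\gamma_2}|)\le\omega_1$ and in noting that the rescaled oscillation interval contains $[-\sigma_1,\sigma_1]$.
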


\begin{proof}
    We apply Lemma~\ref{lem_embed} with $g(t)=\varphi(\eta t)$, $r=b$, $R=2B$,
    $\omega=\omega_1$, $L=L_\sigma'$,
    $\widetilde{g}_1(t)=\wphi_{\gamma_1}(\eta t)$,
    $\widetilde{g}_2(t)=\wphi_{\gamma_2}(\eta t)$.
    Note that the typical $v_{\gamma_1}$ satisfies $|v_{\gamma_1}|\in[r,R/2]$.
    Let $v_{\gamma_1}>0$, $v_{\gamma_2}>0$; other cases are similar.
    Recall the notation $\lambda^\circ = v_{\gamma_2}/v_{\gamma_1}$.

    Let $v_{\gamma_2}\ge v_{\gamma_1}$. We have 
    $\lambda^\circ \le n^{1/2}/b=\lambda^{\mathrm{max}}$;
    therefore, we try to embed with some $\lambda$,
    $|\lambda-\lambda^\circ|\le \omega_1/(2BL)$
    (it is clear that $\omega_1/(2BL)<1/2$). By Lemma~\ref{lem_embed}, we get
    the embedding with accuracy $\le 3\omega_1$ on the whole segment, hence also on the grid;
    thus, $\wphi_{\gamma_1}\hookrightarrow\wphi_{\gamma_2}$.

    Now suppose that $\wphi_{\gamma_1}\hookrightarrow\wphi_{\gamma_2}$. Recall the
    inequality~(\ref{delta_bound}) obtained at the step 2. It gives 
    the estimate for $g$ on the segment $[-b,b]$ since $\eta \sigma_1=b$:
    $$
    \Delta = \max_{|t|\le b}|g(t)-g(0)|\ge\frac12\omega_2.
    $$
    We may apply (ii) from Lemma~\ref{lem_embed}; note that the condition
    $\omega_2/2\ge 14\omega_1$ is satisfied by~\eqref{omegas}. We obtain 
    \begin{equation}
    \label{lambda_approx}
    \frac{|\lambda-\lambda^\circ|}{\max(\lambda,\lambda^\circ)} \le
    C\omega_1\omega_2^{-1}\ln(4Lb/\omega_2) \le \omega_3
    /\lambda^{\mathrm{max}}.
    \end{equation}
    Due to the construction, we have $\max(\lambda,\lambda^\circ)\le \lambda^{\mathrm{max}}$, and consequently 
    $|\lambda-\lambda^\circ|\le \omega_3$. Thus, 
    $$
    v_{\gamma_2}=v_{\gamma_1}\lambda^\circ = v_{\gamma_1}(\lambda - (\lambda-\lambda^\circ)) \ge
    v_{\gamma_1}(1-\omega_3).
    $$
\end{proof}

\subsection{The search for typical $v_i$.}

For all pairs $i,j\in\{1,\ldots,N_2\}$ we start the embedding procedure to search for
possible embeddings $\wphi_i\hookrightarrow\wphi_j$.
It will allow us to compare the values of different $|v_i|$ with the relative error 
$\omega_3$. In the following we use 
the estimates~(\ref{distr3}),~(\ref{bB_prop}) and Proposition~\ref{prop_embed}.

Let us calculate the numbers 
$$
K_j := \#\{i\colon \wphi_i\hookrightarrow \wphi_j\}.
$$
Denote by $J_0$ the set of $j$ such that $0.4 \le K_j/N_2 \le 0.5$.
Let us consider the set $\{v_j, j\in J_0\}$.

Let us show that if $j\in J_0$, then $|v_j|\le 0.75$.
Suppose the converse, let $|v_j|>0.75$. Use that $\Phi^*(0.75) > 0.54$; hence 
due to~(\ref{distr3}), there are at least $0.53N_2$ indices $i$, such that $|v_i|\le
0.75$; there are at least $0.51N_2$ typical numbers among them.
Proposition~\ref{prop_embed} implies that all 
such $\wphi_i$ embed into $\wphi_j$, this contradicts $j\in J_0$.

Now we show that $j\in J_0$ implies $|v_j|\ge 0.45$. Let $|v_j|<0.45$ and
$\wphi_i\hookrightarrow\wphi_j$. If $v_i$ is typical, then due to 
Proposition~\ref{prop_embed} we have $|v_i|\le 0.46$ (we may assume that
$\omega_3$ is small). The amount of such $v_i$ is at most
 $N_2(\Phi^*(0.46)+0.01)\le 0.37N_2$. The amount of non-typical $v_i$ is at most 
 $0.02N_2$. Therefore, there are at most $0.39N_2$ such $i$, this contradicts 
$j\in J_0$.

Arguing as above, we see that indices $j$ with $|v_j|\in[0.57,0.62]$ belong to 
$J_0$. Therefore, the set $J_0$ is non-empty:
$$
\#J_0 \ge N_2(\Phi^*(0.62)-\Phi^*(0.57) - 0.01) \ge 0.02N_2.
$$
We take an arbitrary $i_0\in J_0$ and obtain $|v_{i_0}|\in
[0.45,0.75]$. Moreover, $v_{i_0}$ is typical.

\subsection{The recovery of the vector $a$}

Here we use the extrapolation and embedding procedures and also the
function $\wphi_{i_0}$ with typical $|v_{i_0}|\le 3/4$ constructed at the
previous step.

For every $k=1,\ldots,n$ we try to embed $\wphi_{i_0}\hookrightarrow
\wphi_{e_k}$ and find the corresponding $\widetilde{\lambda}$ (using the
\texttt{embedding} procedure). Since $a$ is a unit vector, 
for at least one $k$ we have $n^{1/2}|a_k|\ge 1 > |v_{i_0}|$ and thus the embedding exists. 
Suppose that for the $k=k^*$ the corresponding $\widetilde{\lambda}$ is maximal.
We have $|n^{1/2}|a_{k^*}|/|v_{i_0}| -
\widetilde{\lambda}(\wphi_{i_0},\wphi_{e_{k^*}})|\le\omega_3$. Although 
$|a_{k^*}|$ is not necessarily maximal, in any case  
$\max|a_k|\le 1.01|a_{k^*}|$.

Without loss of generality we may assume that $a_{k^*}>0$.
Indeed, as we already noticed in Introduction,
the function $f$ is invariant under the substitution $(a,\varphi(x))\mapsto (-a,\varphi(-x))$.

Thus, we know the ratio $n^{1/2}a_{k^*}/|v_{i_0}|$ up to $\omega_3$;
let us determine similar ratios for all other $k$. For a given $k$ consider the vector $\gamma =
0.9e_{k^*} + 0.1e_k \in B_2^n$. Then
$$
v_\gamma = n^{1/2}\langle 0.9e_{k^*} + 0.1e_k, a\rangle = n^{1/2}(0.9a_{k^*} +
0.1 a_k) > 0.75 \ge |v_{i_0}|,
$$
therefore there exists an embedding $\wphi_{i_0}\hookrightarrow\wphi_\gamma$ and  
$|\widetilde{\lambda}(\wphi_{i_0},\wphi_\gamma)-v_\gamma/|v_{i_0}||\le \omega_3$ holds. From
the equality $n^{1/2}a_k=-9n^{1/2}a_{k^*} +10v_\gamma$ and from the obtained
inequalities on $\widetilde{\lambda}(\wphi_{i_0},\wphi_\gamma)
,\widetilde{\lambda}(\wphi_{i_0},\wphi_{e_{k^*}})$ it follows that $$
\left|\frac{n^{1/2}a_k}{|v_{i_0}|}-10\widetilde{\lambda}(\wphi_{i_0},\wphi_\gamma) + 
9\widetilde{\lambda}(\wphi_{i_0},\wphi_{e_{k^*}})\right| \le C\omega_3.
$$

Thus, we approximate coordinatewise the vector $n^{1/2}a/|v_{i_0}|$ by a vector
$w$ (the coordinates of $w$ are determined from the previous inequality)
with the accuracy $O(\omega_3)$. So, we have $|w-n^{1/2}a/|v_{i_0}||\le
Cn^{1/2}\omega_3$.
It is easy to see that 
$$
|w-ra|\le\delta \quad\Rightarrow\quad
\left|\frac{w}{|w|}-a\right|\le\frac{2\delta}{r-\delta},
$$
which implies that for $\widetilde{a}:=w/|w|$ we have $|\widetilde{a}-a|\le C\omega_3$.
We obtain~(\ref{vosst_a}). At this step we have used $2n(2N_1+1)$ additional
evaluations (we needed the functions $\wphi_{e_k}$ and
$\wphi_{0.9e_{k^*}+0.1e_k}$, $1\le k\le n$).

\subsection{The recovery of $\varphi$}

The knowledge of $\widetilde{a}$ allows us to approximate $\varphi$ at an arbitrary point of the segment 
$[-1,1]$. Namely, for $t\in[-1,1]$ we evaluate $f$ at the point $t\widetilde{a}$ and obtain the approximation $y_t$:
$|y_t-f(t\widetilde{a})|=|y_t-\varphi(t\langle a,\widetilde{a}\rangle)|\le \eps$. Hence 
$$
|y_t-\varphi(t)|\le|\varphi(t\langle a,\widetilde{a}\rangle) - \varphi(t)|+\eps
\le L_\sigma|a-\widetilde{a}| + \eps \le CL_\sigma \omega_3 + \eps;
$$
where we applied the inequality $|\langle a,\widetilde{a}\rangle -1|\le |a-\widetilde{a}|.$
We use this method to obtain approximate values of $\varphi$  on
some grid $\{k/N_3\colon k=-N_3,\ldots,N_3\}$. Then we construct the
polynomial $p_{M_1}$ of degree $M_1$ using the least squares fit, as in the
Step 1. Lemma~\ref{lem_DT} (i) gives us an estimate for the accuracy 
$$
|p_{M_1}(x)-\varphi(x)|\le
C_1M_1^{3/2}(\frac{\rho^{-M_1}}{\rho-1}+L_\sigma\omega_3 + \eps),
\quad -1 \le x \le 1.
$$
We can take $\rho=1+\sigma$.
For our purposes it is sufficient  that each of the summands is at most $\omega_*/6$.
The first summand requires that $M_1\ge C(\sigma)\log(6/\omega_*)$. The condition for the second summand is 
\begin{equation}\label{omega3_last}
    \omega_3 \le \frac{\omega_*}{CL_\sigma}M_1^{-3/2}.
\end{equation}
The condition on $\eps$ is
\begin{equation}\label{eps_cond2}
    \log(1/\eps) \ge \log(6/\omega_*) + C\log M_1.
\end{equation}

So we can put $N_3=2M_1^2$. At this step we used $2N_3+1$ values of $f$.

\subsection{About the choice of parameters}

We choose parameters in the following order:
\begin{itemize}
    \item $N_2=\lceil C\ln(2/\delta_*)\rceil$.
    \item $M_1=\lceil C(\sigma)\log(2/\omega_*)\rceil$, $N_3=2M_1^2$.
    \item We choose $\omega_3$ based on the condition~(\ref{omega3_last}). It automatically implies  
    the condition on $\omega_3$ from~(\ref{vosst_a}).
    \item We choose $\omega_2$ based on condition~(\ref{omega2_cond}).
    \item We choose $\omega_1$ based on inequality~(\ref{omegas}) and
        condition $\omega_2\ge 28\omega_1$.
    \item We define $M$ by the condition~(\ref{M_cond}) and suppose $N_1 = 2M^2$.
    \item The main requirement on $\eps$ is~(\ref{eps_cond}) and (in fact,
        weaker) condition~\eqref{eps_cond2}.
\end{itemize}

Total number of function evaluations equals $N=(2N_1+1)(N_2+2n)+2N_3+1$.

\section{The implementation of the recovery algorithm} 
\label{sect_practice}

We implemented the recovery algorithm  with some modifications
using Python language and the \texttt{numpy} library. The code is
available on Github:~\cite{gitTZ}.
Let us describe the  main changes that we introduced,
and the results of numerical experiments.


The most important modification is related to the embedding procedure. The optimal parameter
$\lambda$ for embedding a polynomial $p_1(t)$ into polynomial $p_2(t)$ can be
found as the solution of the problem $\min \limits_{-1 \le \mu \le 1} \max
\limits_{|t| \le 1} |p_1(t) - p_2(\mu t)|$,
where $\mu = \frac{1}{\lambda}$. Instead of the brute force search  for possible
$\mu$ on the grid and the estimation of $C$-norm, we consider the $L_2$-norm and
explicitly find 
\begin{equation}
    \label{effective_embed}
\min \limits_{-1 \le \mu \le 1} S(\mu),\quad \mbox{where }S(\mu) := \int
\limits_{|t| \le 1} (p_1(t) - p_2(\mu t))^2\,dt.
\end{equation}
Indeed, $S(\mu)$ is a polynomial itself and it can be minimized on the set
$\{-1,1\}\cup\{\mu\in(-1,1)\colon S'(\mu)=0\}$. This approach allows us to
effectively calculate rather accurate estimates of the embedding coefficient. 

To test our algorithm we recover functions of the form 
\begin{equation}
\label{phi_test}
\varphi(x) := x^{K_1}\left(\frac{A_0}{\sqrt{2}} + \sum_{k=1}^{K_2} A_k \cos(\pi
kx) + B_k \sin(\pi kx)\right).
\end{equation}


The values of parameters used in the  program are given in the table below: 

$$
\begin{array}{c|c|c|c|c|c|c|c|c|c}
    n  & \varepsilon & M & M_1 & N_1 & N_2 & N_3 & K_1 & K_2 \\ \hline
    50 & 10^{-20}   & 12 & 30  & 200 & 25  & 200 & 8   & 7 \\ 
\end{array}
$$

We used the values of $\wphi_\gamma(t)$ only for
$t\in[-n^{-1/2},n^{-1/2}]$, i.e. only the interpolation was done (extrapolation
was not required);
the oscillation in our experiments was large enough to search for embeddings
on the segment $[-n^{-1/2},n^{-1/2}]$ and find lambdas with high
accuracy.

We note that the error $\varepsilon$ is very small in our implementation since
the values of the function $\varphi$ near $0$ are also very small. It
is possible to increase the parameter $\varepsilon$ but in this case we should
decrease the degree $K_1$. Also, we can consider a relative error instead of an
absolute error, in this case we can take it equal to e.g. $10^{-5}$.

The following table contains the results of $10$ random experiments made with 
the parameters given above. In each experiment  
the vector of the coefficients of trigonometric polynomial defining $\varphi$ by~\eqref{phi_test}
and the vector $a$ of the ridge function were
chosen randomly and uniformly from the unit spheres.
We also specify the value of the $\alpha$ parameter~\eqref{alpha}.

$$\begin{array}{c|c|c|c|c|c}

\mbox{\#} & |\widetilde a - a|_{2} & \|\wphi - \varphi\|_{C} & \alpha \\ \hline

1 & 1.2691\cdot10^{-5} & 4.7683\cdot10^{-10} & 5.7276\cdot10^{-7} \\ 
2 & 4.5421\cdot10^{-5} & 6.4352\cdot10^{-9} & 3.4025\cdot10^{-6}\\
3 & 1.6969\cdot10^{-4} & 6.3703\cdot10^{-8} & 1.6590\cdot10^{-6}\\
4 & 9.2308\cdot10^{-5} & 1.2412\cdot10^{-8} & 1.5862\cdot10^{-6} \\
5 & 2.2896\cdot10^{-7} & 3.0620\cdot10^{-13} & 5.0647\cdot10^{-6} \\
6 & 6.1517\cdot10^{-6} & 2.3048\cdot10^{-10} & 9.2345\cdot10^{-6} \\
7 & 4.2528\cdot10^{-6} & 6.8383\cdot10^{-11} & 1.9411\cdot10^{-6} \\
8 & 5.4939\cdot10^{-7} & 1.3600\cdot10^{-12} & 2.5017\cdot10^{-6} \\
9 & 2.5037\cdot10^{-6} & 4.3252\cdot10^{-11} & 3.1520\cdot10^{-7}  \\
10 & 4.8359\cdot10^{-5} & 2.6839\cdot10^{-9} & 1.4242\cdot10^{-6} \\

\end{array}$$

The example of the function $\varphi$ of the form ~\eqref{phi_test} is shown in the Fig.~\ref{pic1}.

\begin{figure}[ht]
\centering
\includegraphics[width = 0.8\linewidth]{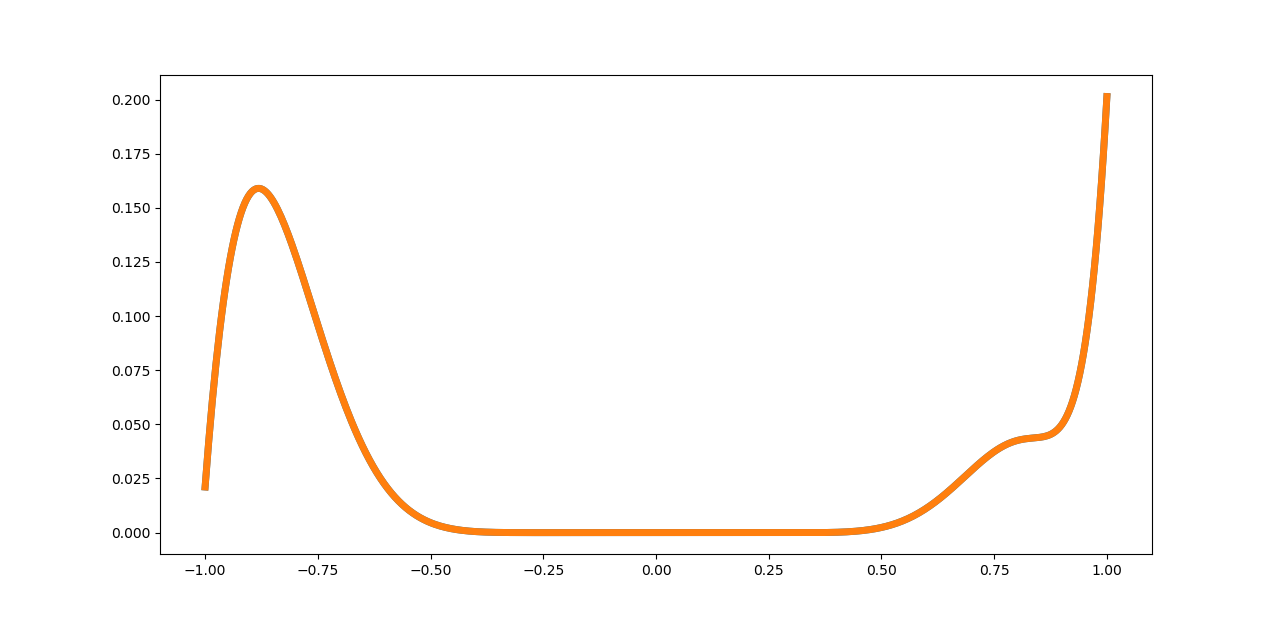}
\caption{The function $\varphi$ (example)}
\label{pic1}
\end{figure}

\section{Further work}

\paragraph{Non-analytic functions.} 
Although the classes $\mathcal R(H(\sigma,Q),B_2^n)$ that we have considered are
rich enough, e.g., they include polynomial ridge functions $f(x)=p(\langle
a,x\rangle)$, it seems that the assumption that $\varphi$ is analytic may be
weakened.

The extrapolation technique of~\cite{DT19} may work for functions $\varphi$ with
certain restrictions on the order of the decay of sequences $\|\varphi^{(k)}\|_{C[-1,1]}$ (as
$k\to\infty$) or $E_k(f):=\inf\limits_{\deg p\le k}\|\varphi-p\|_{C[-1,1]}$.

We also note that the embedding technique may work without extrapolation in some
cases; a lower bound on the oscillation $\max\limits_{|t|\le
h}|\varphi(t)-\varphi(0)|$,
$h\approx n^{-1/2}$ is crucial here. Nevertheless we have to deal
with the case of small oscillation (that was the goal of
Step~\ref{step_smallvar}).

\paragraph{Derandomization.}
Our algorithm is probabilistic, so the natural question is: can we get rid of
the randomness? The only point where we need it is
the condition~\eqref{distr2}. 
Let $(X,\mu)$ be a probability space and $\mathcal C$ is some family of
measurable subsets $C\subset X$. Recall that the discrepancy of a finite set $\Gamma$ 
for a family $\mathcal C$ is defined as
$$
\mathrm{disc}(\Gamma,\mathcal C) := \sup_{C\in\mathcal
C}\left|\mu(C)-\frac{|\Gamma\cap C|}{|\Gamma|}\right|.
$$
The condition~\eqref{distr2} is equivalent to the following
discrepancy bound:
\begin{equation}
    \label{disc}
    \mathrm{disc}(\{\gamma_i\}_{i=1}^{N_2},\mathcal C_n) \le \frac{1}{300},
\end{equation}
where $\mathcal C_n$ is the set of symmetric spherical caps $\{\gamma\in
S^{n-1}\colon |\langle a,\gamma\rangle|>t\}$.
There are two difficulties: first, we need a bound for the discrepancy as
$n\to\infty$; but the discrepancy theory is mostly developed for fixed $n$.
Second, we want a deterministic construction of $\{\gamma_i\}$. In the case of
boxes $\mathcal R_n$ in $[0,1]^n$ it is known~\cite[Prop.~6.72]{Tbook} that
there are constructive sets $\Xi_N$ of $N$ points with
$$
\mathrm{disc}(\Xi_N,\mathcal R_n)\le Cn^{3/2} N^{-1/2} \ln^{1/2}\max(n,N).
$$
So, $N=n^{3+o(1)}$ points would suffice for small discrepancy. It is interesting
to get good constructive bounds in the spherical case.

\paragraph{General ridge functions.}
A ridge function is a simple yet interesting object, but in practice we have
to deal with more complex functions. So it is important to study the 
recovery of generalized ridge functions, e.g., sums $\varphi_1(\langle
a_1,x\rangle)+\varphi_2(\langle a_2,x\rangle)+\ldots+\varphi_r(\langle
a_r,x\rangle)$. One may consider the case of analytic (or even polynomial) functions $\varphi_j$.

\paragraph{Acknowledgement.}
The authors wish to express their gratitude to S.V.~Konyagin for his
suggestion to consider the case of regular ridge functions and constant
encouragement and to the anonymous referees for their careful work and precise
comments.

\end{document}